\newcommand\blfootnote[1]{%
  \begingroup
  \renewcommand\thefootnote{}\footnote{#1}%
  \addtocounter{footnote}{-1}%
  \endgroup
}
\numberwithin{equation}{section}
\newtheorem{Theorem}{Theorem}[section]
\newtheorem{Lemma}{Lemma}[section]
\newtheorem{Example}{Example}[section]
\DeclareMathOperator{\rank}{rank} \DeclareMathOperator{\diag}{diag}
\DeclareMathOperator{\subspan}{span}
\title{A Modified Schur Method for Robust Pole Assignment\\ in State Feedback Control
\thanks{This research was supported in part by NSFC under grant 61075119 and
                     the Fundamental Research Funds for the Central Universities (BUPT2013RC0903).}}
\author{Zhen-chen Guo\thanks{LMAM \& School of Mathematical Sciences, Peking University, Beijing, 100871, China}
\quad Yun-feng Cai\footnotemark[2]
\quad  Jiang Qian\thanks{School of Sciences, Beijing University of Posts and Telecommunications, Beijing, 100876, China}
\quad Shu-fang Xu\footnotemark[2]
}
\begin{document}

\date{\today}

\maketitle

\blfootnote{Email addresses: guozhch06@gmail.com (Z.C. Guo),  yfcai@math.pku.edu.cn (Y.F. Cai),
 jqian104@gmail.com (J. Qian),  xsf@pku.edu.cn (S.F. Xu)}

\begin{abstract}
Recently, a \textbf{SCHUR} method was proposed in \cite{Chu2} to solve the robust pole assignment problem in state feedback control.
It takes  the departure from normality of the closed-loop system matrix $A_c$ as the measure of robustness,
and intends to minimize it via the real Schur form of $A_c$.
The \textbf{SCHUR} method works well for real poles,
but when complex conjugate poles are involved, it does not produce the real Schur form of $A_c$ and can be problematic.
In this paper, we put forward a modified Schur method,
which improves the efficiency of  \textbf{SCHUR} when complex conjugate poles are to be assigned.
Besides producing the real Schur form of $A_c$, our approach also leads to a relatively small departure from
normality of $A_c$. Numerical examples show that our modified  method
produces better or at least comparable results than both \textbf{place} and \textbf{robpole} algorithms,
with much less computational costs.

\vskip 2mm
\noindent {\bf Key words.} pole assignment, state feedback control, robustness, departure from normality,
real Schur form

\vskip2mm

\noindent {\bf AMS subject classification.} 15A18, 65F18, 93B55.
\end{abstract}

\section{Introduction}
Let the matrix pair $(A, B)$ denotes the dynamic state equation
\begin{align}\label{eqstate-equation}
\dot{x}(t)=Ax(t)+Bu(t)
\end{align}
of the time invariant linear system, where $A\in \mathbb{R}^{n\times n}$ and $B\in \mathbb{R}^{n\times m}$
are the open-loop system matrix and the input matrix, respectively.
The dynamic behavior of \eqref{eqstate-equation} is governed by the eigen-structure of $A$, especially the poles (eigenvalues).
And in order to change the dynamic behavior of the open-loop system \eqref{eqstate-equation}
in some desirable way (to achieve stability or to speed up response),
one needs to modify the poles of  \eqref{eqstate-equation}.
Typically, this may be actualized by the state-feedback control
\begin{align}\label{eqfeedback}
u(t)=Fx(t),
\end{align}
where the feedback matrix $F\in \mathbb{R}^{m\times n}$ is to be chosen such that the closed-loop system
\begin{align}\label{eqstate-equation2}
\dot{x}(t)=(A+BF)x(t)\equiv A_c x(t)
\end{align}
has specified poles.

Mathematically,  the {\em  state-feedback pole assignment problem} can be stated as:

\noindent{\bf State-Feedback Pole Assignment Problem (SFPA)}
Given $A\in\mathbb{R}^{n\times n}$, $B\in\mathbb{R}^{n\times m}$ and
a set of $n$ complex numbers $\mathfrak{L}=\{\lambda_1, \lambda_2, \ldots, \lambda_n\}$,
closed under complex conjugation,
find an $F\in\mathbb{R}^{m\times n}$ such that $\lambda(A+BF)=\mathfrak{L}$,
where $\lambda(A+BF)$ is the eigenvalue set of $A+BF$.

A necessary and sufficient condition for the solvability of the {\bf SFPA} for any set $\mathfrak{L}$ of $n$ self-conjugate complex numbers is that $(A, B)$ is controllable, or equivalently,
the controllability matrix $\begin{bmatrix} B&AB&\cdots& A^{n-1}B\end{bmatrix}$ is of full row rank \cite{ XU, Won, Wonh}. Many
algorithms have been put forward to solve the {\bf SFPA}, such as
the invariant subspace method \cite{ PCK1},
the QR-like method \cite{ MP2, MP3}, etc..
We refer readers to \cite{ASC, BD, Chu0, Ka,RM, PM, Var, FO} for some other approaches.

When $m>1$, the solution to the {\bf SFPA} is generally not unique.
We may then utilize the freedom of $F$ to achieve some other desirable properties of the closed-loop system.
In applications, one sympathetic character for system design is that the eigenvalues of the closed-loop system matrix $A_c$ are insensitive to perturbations,
which leads to the following {\em state-feedback robust pole assignment problem}:

\noindent{\bf State-Feedback Robust Pole Assignment Problem (SFRPA)}
Find a solution $F\in\mathbb{R}^{m\times n}$ to the {\bf SFPA}, such that the closed-loop system is robust, that is, the eigenvalues of $A_c$ are as insensitive to perturbations on $A_c$ as possible.

The key to solve the {\bf SFRPA} is to choose an appropriate measure of robustness formulated in quantitative form.
Some measures can be found in \cite{XU, KNV, BN, Chu2, Dic}, such as the \texttt{condition number measurement}
$\kappa_F(X)=\|X\|_F\|X^{-1}\|_F$, where $X$ is the eigenvector matrix of $A_c$,
the \texttt{departure from normality} $\Delta_F(A_c)=\sqrt{\|A_c\|_F^2-\sum_{j=1}^{j=n}|\lambda_j|^2}$ and so on.
Ramar and Gourishankar \cite{RG} made an early contribution to the {\bf SFRPA}
and since then various optimization methods have been proposed based on different
measures \cite{ BN, CB, Chu2, KNV, Dic, Tits, LY}.
The most classic methods should be those proposed by Kautsky, Nichols and Van Dooren in \cite{KNV},
where $\kappa_F(X)$ is used as the  measure of robustness of the closed-loop system matrix.
However, Method $0$ in \cite{KNV} may fail to converge, Method $1$ may suffer from slow convergence,
and Method $2/3$ may not perform well on ill-conditioned problems.
Based on Method $0$ in \cite{KNV}, Tits and Yang \cite{Tits} proposed a method for solving the {\bf SFRPA}
by trying to maximize the absolute value of the determinant of the eigenvector matrix $X$.
The optimization processes are iterative, and hence generally expensive.
Recently, Chu \cite{Chu2} put forward a Schur-type method for the {\bf SFRPA}
by tending to minimize the departure from normality of the closed-loop system matrix $A_c$ via the Schur decomposition of $A_c$.
It computes the matrices $X$ and $T$ column by column, where $A_c=XTX^{-1}$, $X,T$ are real  and $T$ is upper  quasi-triangular,
such that the strictly block upper triangular elements of matrix $T$ are minimized in each step.
If $\lambda_1,\dots,\lambda_n$ are all real, \textbf{SCHUR} \cite{Chu2}
will generate an orthogonal matrix $X$, that is, $A_c=XTX^{-1}$ is the Schur decomposition of $A_c$.
This implies that the departures from normality of $A_c$ and $T$ are the same.
Hence the strategy aiming to minimize the departure from normality of $T$ is also pliable to $A_c$.
However, in case of complex conjugate  poles, it cannot produce an orthogonal $X$,
suggesting that the departure from normality of $A_c$ is generally not identical to that of $T$.
Hence, although it attempts to optimize the departure from normality of $T$, that of $A_c$ may still be large.

In this paper, we  propose a modified Schur method upon \textbf{SCHUR} \cite{Chu2},
where poles are assigned via the real Schur decomposition of $A_c=XTX^{\top}$,
with $X$ being real orthogonal and $T$ being real upper  quasi-triangular.
In each step (assigning a real pole or a pair of conjugate poles),
one optimization problem arises for purpose of minimizing the departure from normality of $T$.
When assigning a real pole, we improve the efficiency of \textbf{SCHUR} by computing the  SVD of a matrix,
instead of computing the GSVD of a matrix pencil.
When assigning a pair of conjugate poles,
by exploring the properties of the posed optimization problem,
we provide a polished way to obtain its suboptimal solution.
Numerical examples show that our method outperforms \textbf{SCHUR} when complex conjugate poles are involved.
We also compare our method with the MATLAB functions \textbf{place} (an implementation of Method 1 in \cite{KNV}),
\textbf{robpole} (an implementation of the method in \cite{Tits}) and the \textbf{O-SCHUR} algorithm
(an implementation of an optimization method in \cite{Chu2}) on some benchmark examples and randomly generated examples,
where numerical results show that our method is comparable in accuracy and robustness, while with lower computational costs.

The paper is organized as follows. In Section 2, we give some preliminaries which will be used
in subsequent sections. Our method is developed in Section 3,
including both the real case and the complex conjugate case.
Numerical results  are presented in Section 4.
Some concluding remarks are finally drawn in Section 5.

\section{Preliminaries}
In this section, we briefly review the parametric solutions to the {\bf SFPA}, and the departure from normality.

\subsection{Solutions to the SFPA}
The parametric solutions to the {\bf SFPA} can be expressed in several ways.
In this paper, as in \cite{Chu2}, we  formulate it
by using the real Schur decomposition of $A_c=A+BF$. Assume that the real Schur decomposition of $A+BF$ is
\begin{equation}\label{eqrealschur}
A + BF = XTX^{\top},
\end{equation}
where $X\in \mathbb{R}^{n\times n}$ is orthogonal, $T\in \mathbb{R}^{n\times n}$ is upper  quasi-triangular with only $1\times 1$ and $2\times2$ diagonal blocks.

Without loss of generality, we may assume that $B$ is of full column rank.
Let
\begin{equation}\label{eqqrofb}
B=Q\begin{bmatrix}R\\ 0\end{bmatrix}=\begin{bmatrix}Q_1&Q_2\end{bmatrix}\begin{bmatrix}R\\ 0\end{bmatrix}=Q_1R
\end{equation}
be the QR decomposition of $B$,
where $Q\in\mathbb{R}^{n\times n}$ is orthogonal,
$Q_1\in \mathbb{R}^{n\times m}$, and $R\in\mathbb{R}^{m\times m}$ is nonsingular and upper triangular.

It follows from \eqref{eqrealschur} that
\begin{equation}\label{eqchangingofrealschur}
AX + BFX - XT =0.
\end{equation}
Pre-multiplying \eqref{eqchangingofrealschur} by
$\diag(R^{-1}, I_{n-m})\begin{bmatrix}Q_1&Q_2\end{bmatrix}^{\top}$
on both sides gives
\begin{equation}\label{eqsolve}
   \left\{ \begin{array}{l}
   R^{-1}Q_1^{\top}AX + FX - R^{-1}Q_1^{\top}XT=0, \\
   Q_2^{\top}(AX -XT)=0.
   \end{array}\right.
\end{equation}
Consequently, if we get an orthogonal matrix $X$ and an upper  quasi-triangular matrix $T$ from the second equation of \eqref{eqsolve}, then a solution $F$ to the {\bf SFPA} will be
obtained immediately from the first equation of \eqref{eqsolve} as
\begin{equation} \label{eqsolveoff}
F=R^{-1}Q_1^{\top}(XTX^{\top}-A).
\end{equation}

\subsection{Departure from normality}

In this paper, we adopt the departure from normality of $A_c=A+BF$
as a  measure of robustness of the closed-loop system matrix as in \cite{Chu2},
which is defined as (\cite{Hen, SSun})
\[
\Delta_F(A_c)=\sqrt{\|A_c\|_F^2-\sum_{j=1}^{n}|\lambda_j|^2},
\]
where $\lambda_1,\dots,\lambda_n$ are the poles to be assigned, and hence eigenvalues of $A_c$.
Now let $D$ be the block diagonal part of $T$ with only $1\times 1$ and $2\times 2$ blocks on its diagonal.
Each $1\times1$ block of $D$ admits a real eigenvalue $\lambda_j$ of $T$,
while each $2\times 2$ block of $D$ admits a pair of conjugate eigenvalues
$\lambda_{j}=\alpha_j+ i \beta_j, \lambda_{j+1}=\bar{\lambda}_j$ and is of the form
$D_{j}=\begin{bmatrix}\begin{smallmatrix}\alpha_j&\delta_j\beta_j\\-\frac{\beta_j}{\delta_j}&\alpha_j
\end{smallmatrix}\end{bmatrix}\in \mathbb{R}^{2\times2}$ with $\delta_j\beta_j\ne 0$,
where $\delta_j$ is some real number.
Let $N=T-D=\begin{bmatrix}\breve{v}_1&\breve{v}_2&\cdots&\breve{v}_n\end{bmatrix}$ be the strictly upper  quasi-triangular part of $T$
with $\breve{v}_k=\begin{bmatrix}v_k^\top&0\end{bmatrix}^\top, v_k\in \mathbb{R}^{k-1} \text{or} \ \mathbb{R}^{k-2} $.
Direct calculations give rise to
\begin{equation}\label{departure}
\Delta_F^2(A_c)=\Delta_F^2(T)=\|N\|_F^2+\sum_{j}(\delta_j-\frac{1}{\delta_j})^2\beta_j^2,
\end{equation}
where the summation is over all $2\times 2$ blocks of $D$.

When all poles $\lambda_1,\dots,\lambda_n$ are real, the second part of $\Delta_{F}^2(A_c)$ in \eqref{departure} will vanish.
However, when some poles are non-real, not only the strictly block upper triangular part $N$ contributes to the departure from normality,
but also the block diagonal part $D$.
When some $|\delta_j|$ is large or close to zero, the second term can be pretty large,
which means that it is not negligible.

\section{Solving the SFRPA via the real Schur form}
In this section, we solve the {\bf SFRPA} by finding an orthogonal matrix
$X=\begin{bmatrix}x_1&x_2&\cdots&x_{n}\end{bmatrix}$
and an upper  quasi-triangular matrix $T=D+N$ satisfying
the second equation of \eqref{eqsolve},
such that $\Delta_{F}^2(A_c)$ in \eqref{departure} is minimized.
Obtaining a global optimization solution to the problem
$\min\{\Delta_F^2(A_c)\}$ is rather difficult.
In this paper, we  propose an efficient method to get a suboptimal solution,
which balances the contributions of $N$ and $D$ to the departure from normality.
As in \cite{Chu2}, we compute the matrices $X$ and $T$ column by column.
%

For any matrix $S$, we denote its range space and null space by $\mathcal{R}(S)$ and $\mathcal{N}(S)$, respectively.
Assume that we have already obtained
$X_{j}=\begin{bmatrix}x_1&x_2&\cdots&x_{j}\end{bmatrix} \in\mathbb{R}^{n\times j}$ and
$T_{j} \in\mathbb{R}^{j\times j}$ satisfying
\begin{align}\label{eq1}
Q_2^\top (AX_{j} - X_{j}T_{j})=0,\qquad X_{j}^\top X_{j}=I_{j},
\end{align}
where $T_j$ is upper  quasi-triangular and
$\lambda(T_j)=\{\lambda_k\}_{k=1}^{k=j}$.
We then are to assign the pole $\lambda_{j+1}$ (if $\lambda_{j+1}$ is real)
or poles $\lambda_{j+1},\bar{\lambda}_{j+1}$ (if $\lambda_{j+1}$ is non-real)
to get $x_{j+1}$, $\breve{v}_{j+1}$ or $x_{j+1}, x_{j+2}$, $\breve{v}_{j+1}, \breve{v}_{j+2}$,
such that the departure from normality of $A_c$ is optimized in some sense.
This procedure is repeated until all columns of $X$ and $T$ are acquired,
and eventually a solution $F$ to the {\bf SFRPA} would be computed from \eqref{eqsolveoff}.
In the following subsections we will distinguish two different cases when $\lambda_{j+1}$ is real or non-real.

Before this, we shall show how to get the first one (two) column(s) of $X$ and $T$.
If $\lambda_1$ is real, the first column of $T$ is then $\lambda_1e_1$, or $T_1=\lambda_1$,
and the first column $x_1$ of $X$ must satisfy
\begin{equation}\label{initial}
Q_2^{\top}(A-\lambda_1I_n)x_1=0,
\end{equation}
and $\|x_1\|_2=1$. Let the columns of $S\in\mathbb{R}^{n\times r}$
be an orthonormal basis of $\mathcal{N}(Q_2^{\top}(A-\lambda_1I_n))$,
then $x_1$ can be chosen to be any unit vector in $\mathcal{R}(S)$. We take
\begin{align}\label{x1real}
x_1=(S\begin{bmatrix}1& \ldots&1\end{bmatrix}^{\top})/{\|S\begin{bmatrix}1& \ldots&1\end{bmatrix}^{\top}\|_2}
\end{align}
in our algorithm as in \cite{Chu2}, and then initially set $X_1=x_1,T_1=\lambda_1$.

If $\lambda_1=\alpha_1+i\beta_1$ is non-real, to get the real Schur form,
we should place $\bar{\lambda}_1=\alpha_1-i\beta_1$ together with $\lambda_1$.
Notice that $T_2$ is of the form
$T_2=\begin{bmatrix}\begin{smallmatrix}\alpha_{1} & \delta_1 \beta_{1} \\ -\beta_{1}/\delta_1 & \alpha_{1}\end{smallmatrix}\end{bmatrix}$ with $0\ne\delta_1\in\mathbb{R}$,
then the first two columns $x_1,x_2\in\mathbb{R}^n$ of $X$ should be chosen to satisfy
\begin{align}\label{x1x2}
Q_2^{\top}(A\begin{bmatrix}x_1 &x_2\end{bmatrix}-\begin{bmatrix}x_1 &x_2\end{bmatrix}T_2)=0, \quad x_1^\top x_2=0, \quad \|x_1\|_2=\|x_2\|_2=1,
\end{align}
so that $(\delta_1-\frac{1}{\delta_1})^2\beta_1^2$ is minimized,
which obviously achieves its minimum when $\delta_1=1$.
Let the columns of $S\in\mathbb{C}^{n\times r}$ be an orthonormal basis of $\mathcal{N}(Q_2^{\top}(A-\lambda_1I_n))$,
and $S_R=\mbox{Re}(S)$, $S_I=\mbox{Im}(S)$.  Direct calculations show that such $x_1,x_2$ satisfying \eqref{x1x2} with $\delta_1=1$ can be obtained by
\begin{align}\label{x1x2get}
x_1&=\begin{bmatrix}S_{R}& -S_I\end{bmatrix}\begin{bmatrix}\gamma_1&\ldots& \gamma_r& \zeta_1& \ldots& \zeta_r\end{bmatrix}^{\top}, \qquad x_2&=\begin{bmatrix}S_I& S_R\end{bmatrix}\begin{bmatrix}\gamma_1&\ldots& \gamma_r& \zeta_1& \ldots& \zeta_r\end{bmatrix}^{\top},
\end{align}
with $x_1^\top x_2=0$ and $\|x_1\|_2=\|x_2\|_2=1$.
Clearly,
\begin{equation}\label{initial_com}
\begin{split}
&x_1^{\top}x_2+x_2^{\top}x_1\\
=&\begin{bmatrix}\gamma_1&\ldots& \gamma_r&\zeta_1& \ldots& \zeta_r\end{bmatrix}
\begin{bmatrix}S_R^{\top}S_I+S_I^{\top}S_R&S_R^{\top}S_R-S_I^{\top}S_I\\
S_R^{\top}S_R-S_I^{\top}S_I&-(S_R^{\top}S_I+S_I^{\top}S_R) \end{bmatrix}
\begin{bmatrix}\gamma_1&\ldots& \gamma_r& \zeta_1& \ldots& \zeta_r\end{bmatrix}^{\top},\\
&x_1^{\top}x_1-x_2^{\top}x_2\\
=&\begin{bmatrix}\gamma_1&\ldots& \gamma_r& \zeta_1& \ldots& \zeta_r\end{bmatrix}
\begin{bmatrix}S_R^{\top}S_R-S_I^{\top}S_I&-(S_R^{\top}S_I+S_I^{\top}S_R)\\
-(S_R^{\top}S_I+S_I^{\top}S_R)&S_I^{\top}S_I-S_R^{\top}S_R \end{bmatrix}
\begin{bmatrix}\gamma_1&\ldots& \gamma_r& \zeta_1& \ldots& \zeta_r\end{bmatrix}^{\top}.
\end{split}
\end{equation}
Note that the two matrices in the above two equations are symmetric Hamiltonian systems owning special
properties. So we exhibit some simple results about symmetric Hamiltonian system which will be used here and
when assigning the complex conjugate poles. Both results can be verified directly, and we omit the proof.
\begin{Lemma}\label{Lemma3.1}
Let $A, B\in \mathbb{R}^{n\times n}$ satisfying $A^\top=A, B^\top=B.$ If $\lambda$ is an eigenvalue of
$\begin{bmatrix}A&B\\B&-A\end{bmatrix}$ and $\begin{bmatrix}x^\top&y^\top\end{bmatrix}^\top$ is the corresponding eigenvector,
then
\begin{align*}
\begin{bmatrix}A&B\\B&-A\end{bmatrix} \begin{bmatrix}x&-y\\y&x\end{bmatrix}=\begin{bmatrix}x&-y\\y&x\end{bmatrix}
\begin{bmatrix}\lambda& \\ & -\lambda\end{bmatrix},
\end{align*}
and
\begin{align*}
\begin{bmatrix}B&-A\\-A&-B\end{bmatrix}\begin{bmatrix}x&-y\\y&x\end{bmatrix}
\begin{bmatrix}\frac{\sqrt{2}}{2}&-\frac{\sqrt{2}}{2}\\-\frac{\sqrt{2}}{2}&-\frac{\sqrt{2}}{2}\end{bmatrix}
=\begin{bmatrix}x&-y\\y&x\end{bmatrix}
\begin{bmatrix}\frac{\sqrt{2}}{2}&-\frac{\sqrt{2}}{2}\\-\frac{\sqrt{2}}{2}&-\frac{\sqrt{2}}{2}\end{bmatrix}
\begin{bmatrix}\lambda& \\ & -\lambda\end{bmatrix}.
\end{align*}
\end{Lemma}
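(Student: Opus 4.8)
The plan is to verify the two matrix identities by direct computation, exploiting the block structure of the Hamiltonian matrix and the eigenvector relation $Ax + By = \lambda x$, $Bx - Ay = \lambda y$. First I would record these two scalar-vector equations coming from the eigenvalue hypothesis $\left[\begin{smallmatrix}A&B\\B&-A\end{smallmatrix}\right]\left[\begin{smallmatrix}x\\y\end{smallmatrix}\right] = \lambda\left[\begin{smallmatrix}x\\y\end{smallmatrix}\right]$, namely
\begin{align*}
Ax + By = \lambda x, \qquad Bx - Ay = \lambda y.
\end{align*}
For the first identity, I would compute the product $\left[\begin{smallmatrix}A&B\\B&-A\end{smallmatrix}\right]\left[\begin{smallmatrix}x&-y\\y&x\end{smallmatrix}\right]$ column-block by column-block: the first block column gives $\left[\begin{smallmatrix}Ax+By\\Bx-Ay\end{smallmatrix}\right] = \left[\begin{smallmatrix}\lambda x\\\lambda y\end{smallmatrix}\right] = \lambda\left[\begin{smallmatrix}x\\y\end{smallmatrix}\right]$, and the second block column gives $\left[\begin{smallmatrix}-Ay+Bx\\-By-Ax\end{smallmatrix}\right] = \left[\begin{smallmatrix}\lambda y\\-\lambda x\end{smallmatrix}\right] = -\lambda\left[\begin{smallmatrix}-y\\x\end{smallmatrix}\right]$. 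Assembling, this is exactly $\left[\begin{smallmatrix}x&-y\\y&x\end{smallmatrix}\right]\left[\begin{smallmatrix}\lambda&\\&-\lambda\end{smallmatrix}\right]$, which proves the first claim. Note that symmetry of $A$ and $B$ is not even needed for this part — only the eigenvalue relation — but it will matter for interpreting $\lambda$ (it is automatically real) and is consistent with the ``symmetric Hamiltonian'' terminology.

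For the second identity, the cleanest route is to observe that $\left[\begin{smallmatrix}B&-A\\-A&-B\end{smallmatrix}\right] = \left[\begin{smallmatrix}A&B\\B&-A\end{smallmatrix}\right]\left[\begin{smallmatrix}0&-I\\I&0\end{smallmatrix}\right]$ — that is, right-multiplication by the symplectic unit $J$ sends the first Hamiltonian form to the second. Then I would apply the first identity, writing $\left[\begin{smallmatrix}B&-A\\-A&-B\end{smallmatrix}\right]\left[\begin{smallmatrix}x&-y\\y&x\end{smallmatrix}\right] = \left[\begin{smallmatrix}A&B\\B&-A\end{smallmatrix}\right]J\left[\begin{smallmatrix}x&-y\\y&x\end{smallmatrix}\right]$, and check that $J\left[\begin{smallmatrix}x&-y\\y&x\end{smallmatrix}\right] = \left[\begin{smallmatrix}-y&-x\\x&-y\end{smallmatrix}\right] = \left[\begin{smallmatrix}x&-y\\y&x\end{smallmatrix}\right]\left[\begin{smallmatrix}0&-1\\1&0\end{smallmatrix}\right]$. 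Combining with the first identity gives $\left[\begin{smallmatrix}B&-A\\-A&-B\end{smallmatrix}\right]\left[\begin{smallmatrix}x&-y\\y&x\end{smallmatrix}\right] = \left[\begin{smallmatrix}x&-y\\y&x\end{smallmatrix}\right]\left[\begin{smallmatrix}0&-1\\1&0\end{smallmatrix}\right]\left[\begin{smallmatrix}\lambda&\\&-\lambda\end{smallmatrix}\right]$. It then remains to diagonalize the $2\times2$ factor: a direct check shows $\left[\begin{smallmatrix}0&-1\\1&0\end{smallmatrix}\right]\left[\begin{smallmatrix}\lambda&\\&-\lambda\end{smallmatrix}\right] = \left[\begin{smallmatrix}\frac{\sqrt2}{2}&-\frac{\sqrt2}{2}\\-\frac{\sqrt2}{2}&-\frac{\sqrt2}{2}\end{smallmatrix}\right]\left[\begin{smallmatrix}\lambda&\\&-\lambda\end{smallmatrix}\right]\left[\begin{smallmatrix}\frac{\sqrt2}{2}&-\frac{\sqrt2}{2}\\-\frac{\sqrt2}{2}&-\frac{\sqrt2}{2}\end{smallmatrix}\right]^{-1}$, and since that $2\times2$ matrix is its own inverse, inserting it on both sides and rearranging yields precisely the stated form.

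The only genuine obstacle is bookkeeping: keeping the block-column order and the signs straight when multiplying $2\times2$ block matrices whose entries are themselves matrices, and verifying the $2\times2$ similarity $\left[\begin{smallmatrix}0&-1\\1&0\end{smallmatrix}\right]\left[\begin{smallmatrix}\lambda&\\&-\lambda\end{smallmatrix}\right]$ has the claimed eigenvectors. There is no conceptual difficulty — which is exactly why the paper states that ``both results can be verified directly'' and omits the proof. I would present only the two key reductions (the block-column expansion for the first identity, and the factorization $\left[\begin{smallmatrix}B&-A\\-A&-B\end{smallmatrix}\right] = \left[\begin{smallmatrix}A&B\\B&-A\end{smallmatrix}\right]J$ together with the $2\times2$ diagonalization for the second) and leave the arithmetic to the reader.
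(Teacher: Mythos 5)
The paper itself offers no proof of this lemma (it is dismissed as a direct verification), and your overall plan --- columnwise verification of the first identity plus the factorization $\left[\begin{smallmatrix}B&-A\\-A&-B\end{smallmatrix}\right]=\left[\begin{smallmatrix}A&B\\B&-A\end{smallmatrix}\right]J$ with $J=\left[\begin{smallmatrix}0&-I\\I&0\end{smallmatrix}\right]$ for the second --- is exactly the intended kind of computation, and your treatment of the first identity is correct (including the observation that symmetry of $A,B$ is not needed there). However, the second half as written contains a non-commutativity slip, and two of your displayed intermediate identities are false as stated. Put $W=\left[\begin{smallmatrix}x&-y\\y&x\end{smallmatrix}\right]$, $K=\left[\begin{smallmatrix}0&-1\\1&0\end{smallmatrix}\right]$, $\Lambda=\diag(\lambda,-\lambda)$. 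From $JW=WK$ and the first identity the correct chain is
\begin{align*}
\begin{bmatrix}B&-A\\-A&-B\end{bmatrix}W=\begin{bmatrix}A&B\\B&-A\end{bmatrix}JW=\begin{bmatrix}A&B\\B&-A\end{bmatrix}WK=W\Lambda K,
\end{align*}
not $WK\Lambda$ as you wrote; since $K\Lambda=-\Lambda K$, the two differ by a sign. Correspondingly, your claimed $2\times2$ diagonalization is the wrong way round: with $P=\frac{\sqrt{2}}{2}\left[\begin{smallmatrix}1&-1\\-1&-1\end{smallmatrix}\right]$ (so $P^{-1}=P$), a direct check gives $P\Lambda P=\Lambda K=\left[\begin{smallmatrix}0&-\lambda\\-\lambda&0\end{smallmatrix}\right]$, whereas $K\Lambda=\left[\begin{smallmatrix}0&\lambda\\\lambda&0\end{smallmatrix}\right]=P(-\Lambda)P$; so the ``direct check'' you defer to the reader would in fact fail for the identity you state.

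The two slips happen to cancel, which is why you land on the correct final statement, but as written neither intermediate step survives scrutiny. The repair is immediate: $\left[\begin{smallmatrix}B&-A\\-A&-B\end{smallmatrix}\right]W=W\Lambda K=WP\Lambda P$, and right-multiplying by $P$ together with $P^2=I_2$ gives exactly the asserted second identity. Alternatively (and most robustly), verify the second identity columnwise as you did the first: the two block columns of $\left[\begin{smallmatrix}B&-A\\-A&-B\end{smallmatrix}\right]W$ are $\left[\begin{smallmatrix}\lambda y\\-\lambda x\end{smallmatrix}\right]$ and $\left[\begin{smallmatrix}-\lambda x\\-\lambda y\end{smallmatrix}\right]$, i.e.\ $\left[\begin{smallmatrix}B&-A\\-A&-B\end{smallmatrix}\right]W=W\left[\begin{smallmatrix}0&-\lambda\\-\lambda&0\end{smallmatrix}\right]$, and $\left[\begin{smallmatrix}0&-\lambda\\-\lambda&0\end{smallmatrix}\right]=P\Lambda P$ finishes the argument. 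With the order of $\Lambda$ and $K$ fixed, your proof is complete and matches the direct-verification spirit of the paper.
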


\begin{Lemma}(Property of Two Hamiltonian Systems)\label{Lemma3.2}
Let $A, B\in \mathbb{R}^{n\times n}$ be symmetric, and let
$\begin{bmatrix}A&B\\B&-A\end{bmatrix}=U\diag(\Theta, -\Theta) U^\top$ be the spectral decomposition, where $\Theta=\diag(\theta_1, \theta_2, \ldots, \theta_n)$ with
$\theta_1\geq \theta_2\geq\ldots \geq\theta_n\geq 0$.
If the $j$-th column $u_{j}$  and the $(n+j)$-th column $u_{n+j}$
of $U$ satisfy $u_{n+j}=\begin{bmatrix}&-I_n\\I_n&\end{bmatrix}u_{j}$,
then $\begin{bmatrix}B&-A\\-A&-B\end{bmatrix}=U\begin{bmatrix}0&-\Theta\\-\Theta&0\end{bmatrix}U^\top$.
\end{Lemma}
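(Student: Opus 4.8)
\medskip
\noindent\textbf{Proof proposal.}
The plan is to reduce the statement to a single block identity together with the prescribed pairing of the columns of $U$. Write
\[
H=\begin{bmatrix}A&B\\B&-A\end{bmatrix},\qquad G=\begin{bmatrix}B&-A\\-A&-B\end{bmatrix},\qquad J=\begin{bmatrix}0&-I_n\\I_n&0\end{bmatrix},
\]
so that the hypothesis on $U$ reads $u_{n+j}=Ju_j$ for $j=1,\dots,n$. A one-line block multiplication gives the identity $G=HJ$ (equivalently $G=-JH$, since $A^\top=A$ and $B^\top=B$ also force $JH=-HJ$, which incidentally explains why the spectrum of $H$ is symmetric about the origin); note too that $J^\top=J^{-1}=-J$, $J^2=-I_{2n}$, and $G^\top=G$, consistent with the symmetry of the claimed right-hand side.

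First I would recast the hypothesis blockwise. Partitioning $U=\begin{bmatrix}U_1&U_2\end{bmatrix}$ into two blocks of $n$ columns, the condition $u_{n+j}=Ju_j$ for all $j$ is exactly $U_2=JU_1$; applying $J$ once more and using $J^2=-I_{2n}$ gives $JU_2=-U_1$, hence
\[
JU=\begin{bmatrix}JU_1&JU_2\end{bmatrix}=\begin{bmatrix}U_2&-U_1\end{bmatrix}=U\begin{bmatrix}0&-I_n\\I_n&0\end{bmatrix}=UJ .
\]
Thus $J$ commutes with the orthogonal matrix $U$, and consequently $U^\top JU=U^\top UJ=J$.

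It then only remains to compute $U^\top GU$: inserting $UU^\top=I_{2n}$ and using $G=HJ$, the spectral decomposition $U^\top HU=\diag(\Theta,-\Theta)$, and $U^\top JU=J$,
\[
U^\top GU=(U^\top HU)(U^\top JU)=\begin{bmatrix}\Theta&0\\0&-\Theta\end{bmatrix}\begin{bmatrix}0&-I_n\\I_n&0\end{bmatrix}=\begin{bmatrix}0&-\Theta\\-\Theta&0\end{bmatrix},
\]
and premultiplying by $U$ and postmultiplying by $U^\top$ yields $G=U\begin{bmatrix}0&-\Theta\\-\Theta&0\end{bmatrix}U^\top$. There is no real obstacle in this argument; its only substance is spotting the factorization $G=HJ$ and observing that the prescribed column-pairing of $U$ is precisely what makes $J$ commute with $U$. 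I would append a one-sentence remark that such a $U$ always exists (pair each $\theta$-eigenvector $u_j$ of $H$ with the $(-\theta)$-eigenvector $Ju_j$, and take a $J$-compatible orthonormal basis $v_1,Jv_1,v_2,Jv_2,\dots$ of $\mathcal{N}(H)$), so the hypothesis is not vacuous, and that it is genuinely needed, since an orthogonal change of basis within the eigenspaces of $H$ that is not applied compatibly on the $\pm\theta$ pairs preserves the spectral decomposition of $H$ but destroys the block form of $U^\top GU$.
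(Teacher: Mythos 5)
Your proof is correct: the paper omits this verification (it merely states that both lemmas ``can be verified directly''), and your argument --- the factorization $\begin{bmatrix}B&-A\\-A&-B\end{bmatrix}=HJ$ with $H=\begin{bmatrix}A&B\\B&-A\end{bmatrix}$, $J=\begin{bmatrix}0&-I_n\\I_n&0\end{bmatrix}$, combined with the observation that the column pairing $U_2=JU_1$ forces $JU=UJ$ and hence $U^\top GU=(U^\top HU)(U^\top JU)=\diag(\Theta,-\Theta)J$ --- is precisely such a direct verification, cleanly organized. Two small remarks: the hypothesis must be read as $u_{n+j}=Ju_j$ for \emph{every} $j=1,\dots,n$ (this is what $U_2=JU_1$ encodes and what the paper's application supplies), and the anticommutation $JH=-HJ$ holds for arbitrary $A,B$ by the block structure alone --- the symmetry of $A$ and $B$ is needed only so that $H$ admits the orthogonal spectral decomposition used in the lemma.
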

Applying Lemma \ref{Lemma3.2} to the two symmetric Hamiltonian systems which appeared in \eqref{initial_com}, that is
\begin{align*}
\begin{bmatrix}S_R^{\top}S_I+S_I^{\top}S_R&S_R^{\top}S_R-S_I^{\top}S_I\\
S_R^{\top}S_R-S_I^{\top}S_I&-(S_R^{\top}S_I+S_I^{\top}S_R) \end{bmatrix}=&U\diag(\Theta, -\Theta) U^\top,\\
\begin{bmatrix}S_R^{\top}S_R-S_I^{\top}S_I&-(S_R^{\top}S_I+S_I^{\top}S_R)\\
-(S_R^{\top}S_I+S_I^{\top}S_R)&S_I^{\top}S_I-S_R^{\top}S_R \end{bmatrix}=&U\begin{bmatrix}0&-\Theta\\-\Theta&0\end{bmatrix}U^{\top},
\end{align*}
then if we let
\begin{align}\label{gammamunu}
\begin{bmatrix}\gamma_1&\ldots& \gamma_r&\zeta_1& \ldots& \zeta_r\end{bmatrix}^\top=U
\begin{bmatrix}\mu_1&\ldots&\mu_r& \nu_1&\ldots&\nu_r\end{bmatrix}^{\top},
\end{align}
$x_1^{\top}x_2+x_2^{\top}x_1=\sum_{j=1}^r\theta_j(\mu_j^2-\nu_j^2)$ and
$x_1^{\top}x_1-x_2^{\top}x_2=-2\sum_{j=1}^r\theta_j\mu_j\nu_j$ follow. Without loss of generality, we may assume that
$\theta_1\geq\theta_2\geq\ldots\geq\theta_r\geq0$, then by taking
\begin{subequations}\label{munuinitial}
\begin{align}
&\mu_3=\nu_3=\ldots=\mu_r=\nu_r=0, \quad \mu_1=-\nu_1=\sqrt{\frac{\theta_2}{\theta_1}\mu_2^2}, \\ &\mu_2=\nu_2=\frac{1}{\|\begin{bmatrix}S_R&-S_I\end{bmatrix}U
\begin{bmatrix}\sqrt{\frac{\theta_2}{\theta_1}}&1&0&\cdots&0&-\sqrt{\frac{\theta_2}{\theta_1}}&1&0&\cdots&0\end{bmatrix}^{\top}\|_2},
\end{align}
\end{subequations}
it is easy to verify that  \eqref{x1x2} holds with $x_1$ and $x_2$ computed by \eqref{x1x2get} and \eqref{gammamunu}.
Hence, we can still choose initial vectors $x_1$ and $x_2$, so
that $(\delta_1-\frac{1}{\delta_1})^2\beta_1^2=0$. We then initially set
\begin{align}\label{initialnonreal}
X_2=\begin{bmatrix}x_1 &x_2\end{bmatrix},\qquad T_2=\begin{bmatrix}\alpha_{1} & \beta_{1} \\ -\beta_{1}& \alpha_{1}\end{bmatrix}.
\end{align}

Now assume that \eqref{eq1} has been satisfied with $j\geq1$,
we shall then assign the next pole $\lambda_{j+1}$.

\subsection{Assigning a real pole}
Assume that $\lambda_{j+1}$ is real,
then the $(j+1)$-th diagonal element of $T$ must be $\lambda_{j+1}$.
Comparing the $(j+1)$-th column of $Q_2^{\top}AX - Q_2^{\top}XT=0$ gives rise to
\begin{equation}
Q_2^\top Ax_{j+1} - Q_2^\top X_{j}v_{j+1}- \lambda_{j+1}Q_2^\top x_{j+1}=0.
\end{equation}
Recall the definition of the departure from normality of $A_c$ in \eqref{departure}
and notice that we are now computing the $(j+1)$-th columns of $X$ and $T$,
it is then natural to consider the following optimization problem:
\begin{gather}\label{eqreal-opt-equal}
\min_{\|x_{j+1}\|_2=1}\|v_{j+1}\|_2^2\\
\mbox{s.t. } M_{j+1}
\begin{bmatrix}x_{j+1}\\v_{j+1}\end{bmatrix}=0 ,\label{eqreal-opt-constrain}
\end{gather}
where
\begin{align}\label{M}
M_{j+1}=\begin{bmatrix}Q_2^\top(A-\lambda_{j+1}I_n)& -Q_2^\top X_j\\X_j^\top&0\end{bmatrix}.
\end{align}
Let $r=\dim\mathcal{N}(M_{j+1})$.
Then it follows from the controllability of $(A, B)$
that $Q_2^\top(A-\lambda_{j+1} I_n)$ is of full row rank,
indicating that  $n-m\leq \rank (M_{j+1})\leq n-m+j$ and $\mathcal{N}(M_{j+1})\neq\emptyset$ (\cite{Chu2}).
Suppose that the columns of
$S=\begin{bmatrix}S_{1}^\top &S_{2}^\top\end{bmatrix}^\top$
with $S_{1}\in\mathbb{R}^{n\times r}, S_{2}\in\mathbb{R}^{j\times r}$
form an orthonormal basis of $\mathcal{N}(M_{j+1})$, then
\eqref{eqreal-opt-constrain} shows that
\begin{align}\label{eqrealxv}
\begin{array}{lll}
x_{j+1}=S_{1}y,& v_{j+1}=S_{2}y, &\qquad  \forall y\in \mathbb{R}^r.
\end{array}
\end{align}
Consequently, the optimization problem \eqref{eqreal-opt-equal} subject to
\eqref{eqreal-opt-constrain} is equivalent to the following problem:
\begin{equation}\label{eqreal-opt-equal-2}
\min_{y^\top S_{1}^\top S_{1}y=1}y^\top S_{2}^\top S_{2}y.
\end{equation}

Perceived that the discussions above can also be found in \cite{Chu2},
and the constrained optimization problem \eqref{eqreal-opt-equal-2}
is solved via the GSVD of the matrix pencil $(S_1,S_2)$.
We put forward a simpler approach here. Actually, since $S^\top S=I_r$, we have
$S_{2}^\top S_{2}=I_r-S_{1}^\top S_{1}$.
Thus the problem \eqref{eqreal-opt-equal-2} is equivalent to
\begin{equation}\label{eqreal-opt-equal-3}
\min_{y^\top S_{1}^\top S_{1}y=1}y^\top y,
\end{equation}
whose minimum value is acquired when $y$ is an eigenvector of $S_1^{\top}S_1$
corresponding to its greatest eigenvalue and satisfies $y^\top S_{1}^\top S_{1}y=1$.
Once such $y$ is obtained, $x_{j+1}$ and $v_{j+1}$ can be given by \eqref{eqrealxv}.
We may then update $X_j$ and $T_j$ as
\begin{align}\label{updatereal}
X_{j+1}=\begin{bmatrix}X_j &x_{j+1}\end{bmatrix}\in\mathbb{R}^{n\times (j+1)},\qquad
T_{j+1}=\begin{bmatrix}T_j &v_{j+1}\\0 &\lambda_{j+1}\end{bmatrix}\in\mathbb{R}^{(j+1)\times (j+1)},
\end{align}
and continue with the next pole $\lambda_{j+2}$.

\subsection{Assigning a pair of conjugate poles }
In this subsection, we will consider the case that $\lambda_{j+1}$ is non-real. To obtain a real matrix $F$ from the real Schur form of $A_c=A+BF$, we would assign $\lambda_{j+1}$ and $\lambda_{j+2}=\bar{\lambda}_{j+1}$ simultaneously
to get the $(j+1)$-th and $(j+2)$-th columns of $X$ and $T$.

\subsubsection{Initial optimization problem}
Assume that $\lambda_{j+1}=\alpha_{j+1} + i \beta_{j+1} \,\,(\beta_{j+1}\ne 0)$ and let $D_{\delta}=\begin{bmatrix}\alpha_{j+1} & \delta \beta_{j+1} \\ -\beta_{j+1}/\delta & \alpha_{j+1}\end{bmatrix}$ be the diagonal block in $T$ whose eigenvalues are $\lambda_{j+1}$ and $\bar{\lambda}_{j+1}$. By comparing the $(j+1)$-th and $(j+2)$-th columns of $Q_2^{\top}AX - Q_2^{\top}XT=0$, we have
\begin{equation}
Q_2^\top A \begin{bmatrix}x_{j+1}&x_{j+2}\end{bmatrix} -
Q_2^\top X_{j}\begin{bmatrix}v_{j+1}&v_{j+2}\end{bmatrix}- Q_2^\top
\begin{bmatrix}x_{j+1}&x_{j+2}\end{bmatrix} D_{\delta}=0.
\end{equation}
Recalling the form of $\Delta_F^2(A_c)$ in \eqref{departure}, it is then natural to
consider the following optimization problem:
\begin{subequations}\label{opt1}
\begin{align}
\min_{\delta, v_{j+1}, v_{j+2}}&\|v_{j+1}\|_2^2 + \|v_{j+2}\|_2^2 + \beta_{j+1}^2 (\delta -\frac{1}{\delta})^2\label{eqcomplex-opt}\\
\mbox{s.t.}\quad &Q_2^\top (A \begin{bmatrix}x_{j+1}& x_{j+2}\end{bmatrix}
-X_j \begin{bmatrix}v_{j+1} &v_{j+2}\end{bmatrix} - \begin{bmatrix}x_{j+1}& x_{j+2}\end{bmatrix}D_{\delta})=0,\label{eqcomplex-constraina}\\
&X_j^\top \begin{bmatrix}x_{j+1}& x_{j+2}\end{bmatrix}=0,\label{eqcomplex-constrainb}\\
&\begin{bmatrix}x_{j+1}& x_{j+2}\end{bmatrix}^\top \begin{bmatrix}x_{j+1}& x_{j+2}\end{bmatrix}=I_2.\label{eqcomplex-constrainc}
\end{align}
\end{subequations}
The constraints \eqref{eqcomplex-constraina} and \eqref{eqcomplex-constrainc} are nonlinear. In \cite{Chu2}, the author solves this optimization problem by taking $\delta=1$ and neglecting the orthogonal requirement  $x_{j+1}^\top x_{j+2}=0$. These simplify the problem significantly. However, it cannot lead to the real Schur form of the
closed-loop system matrix $A_c$, since $x_{j+1}$ is generally not orthogonal to $x_{j+2}$. Moreover,
the minimum value of the simplified optimization problem in \cite{Chu2} may be much greater than that of the original problem \eqref{opt1}.

We may rewrite the optimization problem \eqref{opt1} into another equivalent form. If we write $\delta=\frac{\delta_2}{\delta_1}$ with $\delta_1,\delta_2>0$, and set
$D_0=\begin{bmatrix}\begin{smallmatrix}\alpha_{j+1} & \beta_{j+1} \\ -\beta_{j+1} & \alpha_{j+1}\end{smallmatrix}\end{bmatrix}$, then $D_{\delta}=\begin{bmatrix}\begin{smallmatrix}1/{\delta_1}&\\&1/{\delta_2}\end{smallmatrix}\end{bmatrix} D_0
\begin{bmatrix}\begin{smallmatrix}\delta_1&\\&\delta_2\end{smallmatrix}\end{bmatrix}$. Redefine
${x}_{j+1}\triangleq\frac{x_{j+1}}{\delta_1}, {x}_{j+2}\triangleq\frac{x_{j+2}}{\delta_2},
{v}_{j+1}\triangleq\frac{v_{j+1}}{\delta_1}, {v}_{j+2}\triangleq\frac{v_{j+2}}{\delta_2}$,
then the optimization problem \eqref{opt1} is equivalent to
\begin{subequations}\label{opt2}
\begin{align}
\min_{\delta_1, \delta_2, v_{j+1}, v_{j+2}}&\|\delta_1v_{j+1}\|_2^2 + \|\delta_2v_{j+2}\|_2^2 + \beta_{j+1}^2 (\frac{\delta_1}{\delta_2} -\frac{\delta_2}{\delta_1})^2\label{eqcomplex-opt-2}\\
\mbox{s.t.}\quad &Q_2^\top (A \begin{bmatrix}x_{j+1}& x_{j+2}\end{bmatrix}
-X_j \begin{bmatrix}v_{j+1} &v_{j+2}\end{bmatrix} - \begin{bmatrix}x_{j+1}& x_{j+2}\end{bmatrix}D_0)=0,\label{eqcomplex-opt-2-constraina}\\
&X_j^\top \begin{bmatrix}x_{j+1}& x_{j+2}\end{bmatrix}=0,\label{eqcomplex-opt-2-constrainb}\\
&\begin{bmatrix}x_{j+1}& x_{j+2}\end{bmatrix}^\top \begin{bmatrix}x_{j+1}& x_{j+2}\end{bmatrix}=\begin{bmatrix}1/{\delta_1^2}&\\&1/{\delta_2^2}\end{bmatrix}.
\label{eqcomplex-opt-2-constrainc}
\end{align}
\end{subequations}
Here the constraint \eqref{eqcomplex-opt-2-constraina} becomes linear.
Once a solution to the optimization problem \eqref{opt2} is obtained, we
need to redefine
\[
v_{j+1}\triangleq\frac{v_{j+1}}{\|x_{j+1}\|_2},\quad v_{j+2}\triangleq\frac{v_{j+2}}{\|x_{j+2}\|_2},\quad x_{j+1}\triangleq\frac{x_{j+1}}{\|x_{j+1}\|_2},\quad x_{j+2}\triangleq\frac{x_{j+2}}{\|x_{j+2}\|_2}
\]
as the corresponding columns of $T$ and $X$.

The constraints \eqref{eqcomplex-opt-2-constraina} and \eqref{eqcomplex-opt-2-constrainb} are linear. 
Actually, all vectors $x_{j+1},x_{j+2},v_{j+1},v_{j+2}$ satisfying these two constraints can be found via the null space of the matrix
\begin{align}\label{Mcomplex}
M_{j+1}=\begin{bmatrix}Q_2^{\top}(A-(\alpha_{j+1}+i\beta_{j+1})I_n)& -Q_2^{\top}X_j\\ X_j^{\top}&0\end{bmatrix}.
\end{align}
Specifically, for any $x_{j+1},x_{j+2},v_{j+1},v_{j+2}$ satisfying \eqref{eqcomplex-opt-2-constraina} and \eqref{eqcomplex-opt-2-constrainb}, direct calculations show that $M_{j+1}\begin{bmatrix}x_{j+1}+i x_{j+2}\\v_{j+1}+i v_{j+2}\end{bmatrix}=0$. Conversely, for any vector $\begin{bmatrix}z^{\top}&w^{\top}\end{bmatrix}^{\top}\in\mathcal{N}(M_{j+1})$, the vectors $x_{j+1}=\mbox{Re}(z), x_{j+2}=\mbox{Im}(z), v_{j+1}=\mbox{Re}(w), v_{j+2}=\mbox{Im}(w)$ satisfy \eqref{eqcomplex-opt-2-constraina} and \eqref{eqcomplex-opt-2-constrainb}.
The constraint \eqref{eqcomplex-opt-2-constrainc} shows that $x_{j+1}^\top x_{j+2}=0$. For any vector $\begin{bmatrix}z^{\top}&w^{\top}\end{bmatrix}^{\top}\in\mathcal{N}(M_{j+1})$ with $\mbox{Re}(z)$ and $\mbox{Im}(z)$ being linearly independent, we may then
orthogonalize $\mbox{Re}(z)$ and $\mbox{Im}(z)$ by the Jacobi transformation as follows to get $x_{j+1}$ and $x_{j+2}$ satisfying $x_{j+1}^\top x_{j+2}=0$.
Let $\varrho_1=\|\mbox{Re}(z)\|_2^2,\ \varrho_2=\|\mbox{Im}(z)\|_2^2, \ \gamma=\mbox{Re}(z)^{\top}\mbox{Im}(z)$ and $\tau=\frac{\varrho_2-\varrho_1}{2\gamma}$, and define $t$ as
\begin{displaymath}
t=\left\{ \begin{array}{ll}
1/(\tau+\sqrt{1+\tau^2}), & \text{ if}\quad \tau\geq0,\\
-1/(-\tau+\sqrt{1+\tau^2}), & \text{ if}\quad \tau<0.\\
\end{array} \right.
\end{displaymath}
Let $c=1/\sqrt{1+t^2}$, $s=tc$. Then $x_{j+1}$ and $x_{j+2}$ obtained by
\begin{align} \label{Jocobi_x}
\begin{bmatrix}x_{j+1}& x_{j+2} \end{bmatrix}=\begin{bmatrix}\mbox{Re}(z)& \mbox{Im}(z) \end{bmatrix}
\begin{bmatrix}c & s\\ -s&c\end{bmatrix}
\end{align}
satisfy $x_{j+1}^\top x_{j+2}=0$. Moreover, if we let
\begin{align}\label{Jocobi_v}
\begin{bmatrix}v_{j+1}&v_{j+2}\end{bmatrix}=\begin{bmatrix}\mbox{Re}(w)& \mbox{Im}(w) \end{bmatrix}
\begin{bmatrix}c & s\\ -s&c\end{bmatrix},
\end{align}
then $x_{j+1},x_{j+2},v_{j+1},v_{j+2}$ satisfy \eqref{eqcomplex-opt-2-constraina} and \eqref{eqcomplex-opt-2-constrainb}. Hence,
we can get $x_{j+1},x_{j+2},v_{j+1},v_{j+2}$ satisfying the constrains \eqref{eqcomplex-opt-2-constraina}-\eqref{eqcomplex-opt-2-constrainc} in this way. Furthermore,
\begin{equation} \label{eqortho-one}
1/{\delta_1^2}=\|x_{j+1}\|_2^2=\|x\|_2^2-\omega,\quad
1/{\delta_2^2}=\|x_{j+2}\|_2^2=\|y\|_2^2+\omega,
\end{equation}
where $x=\mbox{Re}(z)$, $y=\mbox{Im}(z)$, $\omega=\frac{2(x^{\top}y)^2}{\|y\|_2^2-\|x\|_2^2+\sqrt{4(x^{\top}y)^2+(\|y\|_2^2-\|x\|_2^2)^2}}$ if $\|x\|_2<\|y\|_2$; and $\omega=\frac{2(x^{\top}y)^2}{\|y\|_2^2-\|x\|_2^2-\sqrt{4(x^{\top}y)^2
+(\|y\|_2^2-\|x\|_2^2)^2}}$ if $\|x\|_2\ge\|y\|_2$.

\subsubsection{The suboptimal strategy}

It is hard to get an optimal solution to \eqref{opt2} since it is a nonlinear optimization problem with quadratic constraints. Even if
such an optimal solution can be found, the cost will be expensive. So instead of finding an optimal solution,
we prefer to get a suboptimal one with less computational cost.

Let the columns of
$S=\begin{bmatrix}S_1^{\top}&S_2^{\top}\end{bmatrix}^{\top}\in \mathbb{C}^{(n+j)\times r}$
with $S_1\in \mathbb{C}^{n\times r}$ and $S_2\in \mathbb{C}^{j\times r}$
form an orthonormal basis of $\mathcal{N}(M_{j+1})$,
and let $S_1=U\Sigma V^{*}$ be the SVD of $S_1$.
Since $S_1^{*}S_1+S_2^{*}S_2=I_r$, it follows that $S_2^{*}S_2=V(I_r-\Sigma^{*}\Sigma)V^{*}$.
For any vector $\begin{bmatrix}z^{\top}&w^{\top}\end{bmatrix}^{\top}\in \mathcal{N}(M_{j+1})$
with $z\in \mathbb{C}^{n}$ and $w\in \mathbb{C}^{j}$,
there exists $b\in \mathbb{C}^{r}$ such that $z=S_1b=U(\Sigma V^{*}b)$ and $w=S_2b$. Hence
\begin{align*}
\|z\|_2\leq\sigma_1 \|b\|_2 \qquad  \text{ and } \qquad
\|w\|_2^2\geq (1-\sigma_1^2)\|b\|_2^2,
\end{align*}
where $\sigma_1$ is the largest singular value of $S_1$. Now suppose that the real part and the imaginary part of
$z$ are linearly independent satisfying $\|\mbox{Re}(z)\|_2\leq\|\mbox{Im}(z)\|_2$,
and $x_{j+1}, x_{j+2}$, $v_{j+1}, v_{j+2}$ are obtained from the the
Jacobi orthogonal process \eqref{Jocobi_x}, \eqref{Jocobi_v}. Define $C=\frac{\|z\|_2}{\|x_{j+1}\|_2}$, then $C\geq\sqrt{2}$ and the objective function in \eqref{eqcomplex-opt-2} becomes
\begin{equation}\label{value_cost}
\begin{split}
&\|\delta_1v_{j+1}\|_2^2 + \|\delta_2v_{j+2}\|_2^2 + \beta_{j+1}^2 (\frac{\delta_1}{\delta_2} -\frac{\delta_2}{\delta_1})^2\\
=&\frac{C^2}{C^2-1}\frac{\|w\|_2^2}{\|z\|_2^2}+\frac{C^4-2C^2}{C^2-1}\frac{\|v_{j+1}\|_2^2}{\|z\|_2^2}+\beta_{j+1}^2(C^2-3+\frac{1}{C^2-1}).
\end{split}
\end{equation}
Obviously,
\begin{align}\label{value_cost_ieq}
\frac{C^2}{C^2-1}\frac{\|w\|_2^2}{\|z\|_2^2}\leq
\frac{C^2}{C^2-1}\frac{\|w\|_2^2}{\|z\|_2^2}+\frac{C^4-2C^2}{C^2-1}\frac{\|v_{j+1}\|_2^2}{\|z\|_2^2}\leq
C^2\frac{\|w\|_2^2}{\|z\|_2^2}.
\end{align}
So the objective function in \eqref{eqcomplex-opt-2} depends on $\frac{\|w\|_2^2}{\|z\|_2^2}$ and $C$ with
$\min\frac{\|w\|_2^2}{\|z\|_2^2}=\frac{1-\sigma_1^2}{\sigma_1^2}$. In our suboptimal strategy, we will first take
$b$ from $\subspan\{Ve_1\}$, where $e_i$ is the $i$-th column of the identity matrix. With this choice, $\frac{\|w\|_2^2}{\|z\|_2^2}$ achieves its minimum value. And the following theorem shows the relevant results.

\begin{Theorem}\label{Theorem3.1}
With the notations above, let $u_1$ be the first column of $U$ and assume that $\mbox{Re}(u_1)$ and $\mbox{Im}(u_1)$ are linearly independent. Let $x_{j+1}$ and $x_{j+2}$ be the vectors obtained from $\mbox{Re}(u_1)$ and $\mbox{Im}(u_1)$
via the Jacobi orthogonal process
\begin{align*}
\begin{bmatrix}x_{j+1}&x_{j+2}\end{bmatrix}=\begin{bmatrix}\mbox{Re}(u_1)&\mbox{Im}(u_1)\end{bmatrix}\begin{bmatrix}c&s\\-s&c\end{bmatrix},
\end{align*}
and let
\begin{align*}
\begin{bmatrix}v_{j+1}&v_{j+2}\end{bmatrix}=\begin{bmatrix}\mbox{Re}(w)&\mbox{Im}(w)\end{bmatrix}\begin{bmatrix}c&s\\-s&c\end{bmatrix},
\end{align*}
where $w=S_2Ve_1/\sigma_1$. Then $x_{j+1},x_{j+2},v_{j+1},v_{j+2}$ satisfy the constrains \eqref{eqcomplex-opt-2-constraina}-\eqref{eqcomplex-opt-2-constrainc},
and the value of the corresponding objective function specified by \eqref{eqcomplex-opt-2} will be no larger than
\[\frac{1}{\min\{\|x_{j+1}\|_2^2, \|x_{j+2}\|_2^2\}}(\frac{1-\sigma_1^2}{\sigma_1^2}+\beta_{j+1}^2).\]
\end{Theorem}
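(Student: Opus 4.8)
The plan is to verify the three constraints one at a time and then bound the objective function by tracking how the Jacobi rotation redistributes norms. First I would confirm the constraints \eqref{eqcomplex-opt-2-constraina} and \eqref{eqcomplex-opt-2-constrainb}. Since $b=Ve_1$ gives $z=S_1Ve_1=U\Sigma V^*Ve_1=\sigma_1 u_1$ and $w=S_2Ve_1$, the pair $\begin{bmatrix}z^\top&w^\top\end{bmatrix}^\top$ lies in $\mathcal{N}(M_{j+1})$ after scaling; dividing $w$ by $\sigma_1$ matches the normalization $z=\sigma_1 u_1$ versus the vectors $\mathrm{Re}(u_1),\mathrm{Im}(u_1)$ actually fed into \eqref{Jocobi_x}. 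Then the discussion preceding the theorem (the paragraph establishing that the Jacobi rotation of a null-space vector stays in the null space) gives \eqref{eqcomplex-opt-2-constraina}--\eqref{eqcomplex-opt-2-constrainb} for the rotated $x_{j+1},x_{j+2},v_{j+1},v_{j+2}$, and \eqref{eqcomplex-opt-2-constrainc} holds because the Jacobi rotation is precisely designed so that $x_{j+1}^\top x_{j+2}=0$, with $\delta_1,\delta_2$ read off from $\|x_{j+1}\|_2^2$ and $\|x_{j+2}\|_2^2$ via \eqref{eqortho-one}.

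Next I would bound the objective. Using \eqref{value_cost_ieq}, the objective is at most $C^2\frac{\|w\|_2^2}{\|z\|_2^2}+\beta_{j+1}^2(C^2-3+\frac{1}{C^2-1})$ where $C=\|z\|_2/\|x_{j+1}\|_2$; since $\|x_{j+1}\|_2\le\|x_{j+2}\|_2$ under the hypothesis $\|\mathrm{Re}(u_1)\|_2\le\|\mathrm{Im}(u_1)\|_2$, we have $\min\{\|x_{j+1}\|_2^2,\|x_{j+2}\|_2^2\}=\|x_{j+1}\|_2^2$. Since the Jacobi rotation is orthogonal, $\|x_{j+1}\|_2^2+\|x_{j+2}\|_2^2=\|\mathrm{Re}(u_1)\|_2^2+\|\mathrm{Im}(u_1)\|_2^2=\|u_1\|_2^2=1$, and likewise $\|v_{j+1}\|_2^2+\|v_{j+2}\|_2^2=\|w\|_2^2$, while $\|z\|_2=\sigma_1\|u_1\|_2=\sigma_1$. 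With the choice $b=Ve_1/\sigma_1$ we get $w=S_2Ve_1/\sigma_1$, so $\|w\|_2^2=\|S_2Ve_1\|_2^2/\sigma_1^2$; from $S_2^*S_2=V(I_r-\Sigma^*\Sigma)V^*$ this equals $(1-\sigma_1^2)/\sigma_1^2$, hence $\frac{\|w\|_2^2}{\|z\|_2^2}$ realizes its minimum $\frac{1-\sigma_1^2}{\sigma_1^2}$ (after renormalizing so that $\|z\|_2$ matches the unit-norm $u_1$, i.e. $\|z\|_2=\sigma_1$, $\|w\|_2^2=(1-\sigma_1^2)/\sigma_1^2$ and the ratio is $(1-\sigma_1^2)/\sigma_1^4$; I would be careful to track exactly which normalization \eqref{value_cost} uses). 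Factoring $\frac{1}{\|x_{j+1}\|_2^2}$ out of the right-hand side of \eqref{value_cost_ieq}, the bound collapses to $\frac{1}{\|x_{j+1}\|_2^2}\bigl(\frac{1-\sigma_1^2}{\sigma_1^2}+\beta_{j+1}^2\bigr)$, which is the claimed inequality.

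The main obstacle I anticipate is bookkeeping the several rescalings consistently: the passage from \eqref{opt1} to \eqref{opt2} rescales $x,v$ by $1/\delta_i$, then a further renormalization by $\|x_{j+1}\|_2,\|x_{j+2}\|_2$ is applied afterwards, and in the theorem we instead feed the \emph{unit} vector $u_1$ (not $\sigma_1 u_1=z$) into the Jacobi step while dividing $w$ by $\sigma_1$. Getting the constant right requires checking that these scalings combine so that the extra factor is exactly $1/\min\{\|x_{j+1}\|_2^2,\|x_{j+2}\|_2^2\}$ and that the $\frac{C^2}{C^2-1}$ and $\frac{C^4-2C^2}{C^2-1}$ coefficients in \eqref{value_cost} indeed sum to at most $C^2$ as asserted in \eqref{value_cost_ieq} (this is the elementary inequality $\frac{C^2}{C^2-1}+\frac{C^4-2C^2}{C^2-1}=\frac{C^4-C^2}{C^2-1}=C^2$, an equality in fact, so the upper bound in \eqref{value_cost_ieq} is tight and the $\beta_{j+1}^2$ term must be handled separately by noting $C^2-3+\frac{1}{C^2-1}\le C^2$, equivalently $\frac{1}{C^2-1}\le 3$, which follows from $C\ge\sqrt{2}$ only if $C^2\ge 4/3$; since $C\ge\sqrt2$ gives $C^2-1\ge 1$ this is immediate). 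Once the normalization is pinned down the rest is routine substitution.
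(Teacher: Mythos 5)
Your proposal is correct and follows essentially the same route as the paper's proof: verify the constraints via the null-space and Jacobi-rotation argument, then bound the objective by combining \eqref{value_cost}, \eqref{value_cost_ieq} and $C^2-3+\frac{1}{C^2-1}\le C^2$ with $C=1/\|x_{j+1}\|_2$ (and the symmetric case when $\|\mbox{Re}(u_1)\|_2\ge\|\mbox{Im}(u_1)\|_2$). The normalization you flagged is settled exactly as you suspected: the theorem's construction corresponds to $b=Ve_1/\sigma_1$, hence $z=u_1$ with $\|z\|_2=1$, $\|w\|_2^2=(1-\sigma_1^2)/\sigma_1^2$ and $C=1/\|x_{j+1}\|_2$ (not $\|z\|_2=\sigma_1$; moreover $\|w\|_2^2/\|z\|_2^2$ is invariant under a common rescaling of $(z,w)$, so the feared value $(1-\sigma_1^2)/\sigma_1^4$ cannot occur), which is precisely the bookkeeping used in the paper.
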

\begin{proof}
The first part of the theorem is obvious. To prove the second part, note that
here $b=\frac{Ve_1}{\sigma_1}$, $\|z\|_2=\|u_1\|_2=1, \|w\|_2^2=\frac{1-\sigma_1^2}{\sigma_1^2}$. If $\|\mbox{Re}(u_1)\|_2\leq\|\mbox{Im}(u_1)\|_2$, it then follows directly from \eqref{value_cost}, \eqref{value_cost_ieq} and $C^2-3+\frac{1}{C^2-1}\leq C^2$ with $C=\frac{1}{\|x_{j+1}\|_2}$.
The case when $\|\mbox{Re}(u_1)\|_2\geq\|\mbox{Im}(u_1)\|_2$ can be proved similarly.
\end{proof}


Theorem \ref{Theorem3.1} shows that if $\mbox{Re}(u_1)$ and $\mbox{Im}(u_1)$ are
linearly independent, and $\min\{\|x_{j+1}\|_2, \|x_{j+2}\|_2\}$ is not pathologically small, the above procedure will generate $x_{j+1},x_{j+2},v_{j+1},v_{j+2}$ satisfying the constrains \eqref{eqcomplex-opt-2-constraina}-\eqref{eqcomplex-opt-2-constrainc}, and the value of the corresponding objective function in \eqref{eqcomplex-opt-2} is not too large. We then take these $x_{j+1},x_{j+2},v_{j+1},v_{j+2}$ as the suboptimal solution. However, if $\mbox{Re}(u_1)$ and $\mbox{Im}(u_1)$ are linearly dependent, we cannot get orthogonal $x_{j+1}$ and $x_{j+2}$ via the Jacobi orthogonal process. Even if $\mbox{Re}(u_1)$ and $\mbox{Im}(u_1)$ are
linearly independent, the resulted $\min\{\|x_{j+1}\|_2, \|x_{j+2}\|_2\}$ might
be fairly small, which means that the corresponding value of the objective function might be large.
In this case, we would choose $b$ from $\subspan\{Ve_1, Ve_2\}$.

Define
\begin{align}\label{tildexy}
&\tilde{x}_1+i\tilde{y}_1=z_1=u_1=\frac{S_1Ve_1}{\sigma_1}, &&w_1=\frac{S_2Ve_1}{\sigma_1},\notag\\  &\tilde{x}_2+i\tilde{y}_2=z_2=u_2=\frac{S_1Ve_2}{\sigma_2}, &&w_2=\frac{S_2Ve_2}{\sigma_2},
\end{align}
where $\sigma_1, \sigma_2$ are the first two greatest singular values of $S_1$. Let
$b=\begin{bmatrix}\begin{smallmatrix}\frac{Ve_1}{\sigma_1}&\ &\frac{Ve_2}{\sigma_2}\end{smallmatrix}\end{bmatrix}
\begin{bmatrix}\begin{smallmatrix}\gamma_1+i\zeta_1\\ \gamma_2+i\zeta_2\end{smallmatrix}\end{bmatrix}$ with
$\gamma_1^2+\gamma_2^2+\zeta_1^2+\zeta_2^2=1$, then
\begin{align}\label{xyw}
x+iy=z=S_1b=\begin{bmatrix}z_1&z_2\end{bmatrix}\begin{bmatrix}\gamma_1+i\zeta_1\\ \gamma_2+i\zeta_2\end{bmatrix}, \quad
w=S_2b=\begin{bmatrix}w_1&w_2\end{bmatrix}\begin{bmatrix}\gamma_1+i\zeta_1\\ \gamma_2+i\zeta_2\end{bmatrix}.
\end{align}
Denoting
$\tilde{X}=\begin{bmatrix}\tilde{x}_1&\tilde{x}_2\end{bmatrix}$, $\tilde{Y}=\begin{bmatrix}\tilde{y}_1&\tilde{y}_2\end{bmatrix}$,
it can be easily verified that
\begin{align}
x=\begin{bmatrix}\tilde{X}&-\tilde{Y}\end{bmatrix}\begin{bmatrix}\gamma_1&\gamma_2&\zeta_1& \zeta_2\end{bmatrix}^{\top}, \qquad
y=\begin{bmatrix}\tilde{Y}&\tilde{X}\end{bmatrix}\begin{bmatrix}\gamma_1&\gamma_2& \zeta_1& \zeta_2\end{bmatrix}^{\top},
\end{align}
and
\begin{align}\label{eq-orthogonal}
x^{\top}y+y^{\top}x=\begin{bmatrix}\gamma_1& \gamma_2& \zeta_1& \zeta_2\end{bmatrix}
\begin{bmatrix}\tilde{X}^{\top}\tilde{Y}+\tilde{Y}^{\top}\tilde{X}&\tilde{X}^{\top}\tilde{X}-\tilde{Y}^{\top}\tilde{Y}\\
\tilde{X}^{\top}\tilde{X}-\tilde{Y}^{\top}\tilde{Y}&-(\tilde{X}^{\top}\tilde{Y}+\tilde{Y}^{\top}\tilde{X}) \end{bmatrix}
\begin{bmatrix}\gamma_1& \gamma_2& \zeta_1& \zeta_2\end{bmatrix}^\top,
\end{align}
\begin{align}\label{eq-equal-length}
x^{\top}x-y^{\top}y=\begin{bmatrix}\gamma_1& \gamma_2& \zeta_1& \zeta_2\end{bmatrix}
\begin{bmatrix}\tilde{X}^{\top}\tilde{X}-\tilde{Y}^{\top}\tilde{Y}&-(\tilde{X}^{\top}\tilde{Y}+\tilde{Y}^{\top}\tilde{X})\\
-(\tilde{X}^{\top}\tilde{Y}+\tilde{Y}^{\top}\tilde{X})&\tilde{Y}^{\top}\tilde{Y}-\tilde{X}^{\top}\tilde{X} \end{bmatrix}
\begin{bmatrix}\gamma_1& \gamma_2& \zeta_1& \zeta_2\end{bmatrix}^\top.
\end{align}
Obviously, the two matrices in \eqref{eq-orthogonal} and \eqref{eq-equal-length} are
symmetric Hamiltonian systems and they satisfy the property in Lemma \ref{Lemma3.2}. Hence we
can get the following lemma.

\begin{Lemma}\label{Lemma3.3}
Let $\phi_m, \phi_M$ be the two smallest singular values of
$\begin{bmatrix}\tilde{Y}&\tilde{X}\end{bmatrix}$ and
$\begin{bmatrix}\begin{smallmatrix}p_1\\q_1\end{smallmatrix}\end{bmatrix}, \begin{bmatrix}\begin{smallmatrix}p_2\\q_2\end{smallmatrix}\end{bmatrix}$
be the corresponding right singular vectors respectively. Define
\begin{align}\label{Omega}
\Omega=\begin{bmatrix}p_1&p_2&-q_1&-q_2\\q_1&q_2&p_1&p_2\end{bmatrix},
\end{align}
$\Phi=\diag (\phi_1, \phi_2, -\phi_1, -\phi_2)$ with $\phi_1=1-2\phi_m^2$, $\phi_2=1-2\phi_M^2$,
then
\begin{align}\label{specdecom}
\begin{bmatrix}\tilde{X}^{\top}\tilde{X}-\tilde{Y}^{\top}\tilde{Y}&-(\tilde{X}^{\top}\tilde{Y}+\tilde{Y}^{\top}\tilde{X})\\
-(\tilde{X}^{\top}\tilde{Y}+\tilde{Y}^{\top}\tilde{X})&\tilde{Y}^{\top}\tilde{Y}-\tilde{X}^{\top}\tilde{X} \end{bmatrix}=
\Omega\Phi \Omega^{\top},
\end{align}
and
\begin{align*}
\begin{bmatrix}\tilde{X}^{\top}\tilde{Y}+\tilde{Y}^{\top}\tilde{X}&\tilde{X}^{\top}\tilde{X}-\tilde{Y}^{\top}\tilde{Y}\\
\tilde{X}^{\top}\tilde{X}-\tilde{Y}^{\top}\tilde{Y}&-(\tilde{X}^{\top}\tilde{Y}+\tilde{Y}^{\top}\tilde{X}) \end{bmatrix}
=\Omega\left(\begin{array}{c|c}
\begin{array}{cc}
&\\
&\\
\end{array}
&\begin{array}{cc}
\phi_1&\\
&\phi_2\\
\end{array}\\ & \\[-2mm]
\hline
& \\[-2mm]
\begin{array}{cc}
\phi_1&\\
&\phi_2\\
\end{array}
&\begin{array}{cc}
&\\
&\\
\end{array}\\
\end{array}\right)\Omega^{\top}.
\end{align*}
\end{Lemma}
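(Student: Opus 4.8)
The plan is to reduce the whole statement to the singular value decomposition of the $n\times 4$ matrix $M:=\begin{bmatrix}\tilde{Y} & \tilde{X}\end{bmatrix}$, together with Lemmas~\ref{Lemma3.1} and~\ref{Lemma3.2}. First I would use that $z_1=u_1$ and $z_2=u_2$ are columns of the unitary matrix $U$: writing $Z=\tilde{X}+i\tilde{Y}$, the relation $Z^{*}Z=I_2$ splits into $\tilde{X}^{\top}\tilde{X}+\tilde{Y}^{\top}\tilde{Y}=I_2$ and $\tilde{X}^{\top}\tilde{Y}=\tilde{Y}^{\top}\tilde{X}$ (so $\tilde{X}^{\top}\tilde{Y}$ is symmetric). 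Substituting $I_2=\tilde{X}^{\top}\tilde{X}+\tilde{Y}^{\top}\tilde{Y}$ into the two diagonal blocks, and $\tilde{X}^{\top}\tilde{Y}+\tilde{Y}^{\top}\tilde{X}=2\tilde{Y}^{\top}\tilde{X}$ into the off-diagonal blocks, gives the key identity
\[
H:=\begin{bmatrix}A&B\\B&-A\end{bmatrix}=\begin{bmatrix}\tilde{X}^{\top}\tilde{X}-\tilde{Y}^{\top}\tilde{Y}&-(\tilde{X}^{\top}\tilde{Y}+\tilde{Y}^{\top}\tilde{X})\\-(\tilde{X}^{\top}\tilde{Y}+\tilde{Y}^{\top}\tilde{X})&\tilde{Y}^{\top}\tilde{Y}-\tilde{X}^{\top}\tilde{X}\end{bmatrix}=I_4-2M^{\top}M ,
\]
where $A=\tilde{X}^{\top}\tilde{X}-\tilde{Y}^{\top}\tilde{Y}$ and $B=-(\tilde{X}^{\top}\tilde{Y}+\tilde{Y}^{\top}\tilde{X})$ are symmetric; by Lemma~\ref{Lemma3.1} the spectrum of $H$ is symmetric about the origin.

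Next I would diagonalize $H$. Writing $M^{\top}M=\widehat{V}\,\diag(\sigma_1^2,\dots,\sigma_4^2)\,\widehat{V}^{\top}$ with $\widehat{V}=[\widehat{v}_1,\dots,\widehat{v}_4]$ orthogonal and $\sigma_1\ge\cdots\ge\sigma_4\ge 0$ (the singular values of $M$, with right singular vectors $\widehat{v}_i$), one gets $H=\widehat{V}\,\diag(1-2\sigma_1^2,\dots,1-2\sigma_4^2)\,\widehat{V}^{\top}$, so the right singular vectors of $M$ are exactly the eigenvectors of $H$, under $\sigma_i\leftrightarrow 1-2\sigma_i^2$. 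Since $\{1-2\sigma_i^2\}_{i=1}^{4}$ is symmetric about $0$, sorting forces $\sigma_1^2+\sigma_4^2=\sigma_2^2+\sigma_3^2=1$; hence the two smallest singular values $\phi_m=\sigma_4\le\phi_M=\sigma_3$ give $\phi_1=1-2\phi_m^2\ge 0$, $\phi_2=1-2\phi_M^2\ge 0$ with $\phi_1\ge\phi_2$, and the four eigenvalues of $H$ are precisely $\phi_1,\phi_2,-\phi_1,-\phi_2$. Taking $\begin{bmatrix}p_1\\q_1\end{bmatrix}=\widehat{v}_4$ and $\begin{bmatrix}p_2\\q_2\end{bmatrix}=\widehat{v}_3$, we have $H\begin{bmatrix}p_i\\q_i\end{bmatrix}=\phi_i\begin{bmatrix}p_i\\q_i\end{bmatrix}$, and Lemma~\ref{Lemma3.1} then gives $H\begin{bmatrix}-q_i\\p_i\end{bmatrix}=-\phi_i\begin{bmatrix}-q_i\\p_i\end{bmatrix}$. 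Thus the columns of $\Omega$ in~\eqref{Omega} are eigenvectors of $H$ for the eigenvalues $\phi_1,\phi_2,-\phi_1,-\phi_2$, in that order. It then remains to check that $\Omega$ is orthogonal: writing $J=\begin{bmatrix}0&-I_2\\I_2&0\end{bmatrix}$, the first two columns are orthonormal by the SVD, the last two are $J$ times the first two, and $J$ is orthogonal and skew-symmetric, which settles every inner product except $\begin{bmatrix}p_1^{\top}&q_1^{\top}\end{bmatrix}$ against $J\begin{bmatrix}p_2^{\top}&q_2^{\top}\end{bmatrix}^{\top}$ and $\begin{bmatrix}p_2^{\top}&q_2^{\top}\end{bmatrix}$ against $J\begin{bmatrix}p_1^{\top}&q_1^{\top}\end{bmatrix}^{\top}$; these vanish because eigenvectors of the symmetric matrix $H$ for distinct eigenvalues are orthogonal (the exceptional case $\phi_1=\phi_2=0$ gives $M^{\top}M=\tfrac12 I_4$, $H=0$, where the singular vectors may simply be chosen so that $\Omega$ is orthogonal). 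This establishes $H=\Omega\Phi\Omega^{\top}$, i.e.~\eqref{specdecom}.

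The second decomposition then follows from Lemma~\ref{Lemma3.2}. By construction the columns of $\Omega$ satisfy $u_{2+j}=\begin{bmatrix}0&-I_2\\I_2&0\end{bmatrix}u_j$ for $j=1,2$, and~\eqref{specdecom} is exactly $H=\Omega\,\diag(\Theta,-\Theta)\,\Omega^{\top}$ with $\Theta=\diag(\phi_1,\phi_2)$, $\phi_1\ge\phi_2\ge 0$. Lemma~\ref{Lemma3.2}, applied with the symmetric matrices $A,B$ above, then gives $\begin{bmatrix}B&-A\\-A&-B\end{bmatrix}=\Omega\begin{bmatrix}0&-\Theta\\-\Theta&0\end{bmatrix}\Omega^{\top}$; since $\begin{bmatrix}B&-A\\-A&-B\end{bmatrix}$ is the negative of the matrix appearing in~\eqref{eq-orthogonal}, negating both sides produces the stated formula.

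The only genuine obstacle I anticipate is bookkeeping rather than anything conceptual: pinning down the pairing $\sigma_1^2+\sigma_4^2=\sigma_2^2+\sigma_3^2=1$ and, above all, verifying orthogonality of $\Omega$ while cleanly disposing of the degenerate configurations (repeated singular values of $M$, or all $\sigma_i=\tfrac1{\sqrt2}$) in which the singular vectors are not unique and must be chosen to keep the paired block structure of $\Omega$ intact.
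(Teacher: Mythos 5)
Your proposal is correct and follows essentially the same route as the paper's own proof: it rests on the identity that the matrix in \eqref{eq-equal-length} equals $I_4-2\begin{bmatrix}\tilde{Y}&\tilde{X}\end{bmatrix}^{\top}\begin{bmatrix}\tilde{Y}&\tilde{X}\end{bmatrix}$ (obtained from $\tilde{X}^{\top}\tilde{X}+\tilde{Y}^{\top}\tilde{Y}=I_2$ and $\tilde{X}^{\top}\tilde{Y}=\tilde{Y}^{\top}\tilde{X}$) together with the Hamiltonian structure and Lemmas \ref{Lemma3.1} and \ref{Lemma3.2}. You merely spell out details the paper leaves implicit (the $\pm$ pairing of the eigenvalues, the orthogonality of $\Omega$, and the degenerate case $\phi_1=\phi_2=0$), all of which check out.
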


\begin{proof}
Since $(\tilde{X}^{\top}-i\tilde{Y}^{\top})(\tilde{X}+i\tilde{Y})=\begin{bmatrix}z_1&z_2\end{bmatrix}^{*}
\begin{bmatrix}z_1&z_2\end{bmatrix}=I_2$, so $\tilde{X}^{\top}\tilde{X}+\tilde{Y}^{\top}\tilde{Y}=I_2$ and
$\tilde{X}^{\top}\tilde{Y}=\tilde{Y}^{\top}\tilde{X}$.
Thus
\begin{align*}
\begin{bmatrix}\tilde{X}^{\top}\tilde{X}-\tilde{Y}^{\top}\tilde{Y}&-(\tilde{X}^{\top}\tilde{Y}+\tilde{Y}^{\top}\tilde{X})\\
-(\tilde{X}^{\top}\tilde{Y}+\tilde{Y}^{\top}\tilde{X})&\tilde{Y}^{\top}\tilde{Y}-\tilde{X}^{\top}\tilde{X}\end{bmatrix}
=\begin{bmatrix}I_2-2\tilde{Y}^{\top}\tilde{Y}&-2\tilde{Y}^{\top}\tilde{X}\\
-2\tilde{X}^{\top}\tilde{Y}&I_2-2\tilde{X}^{\top}\tilde{X} \end{bmatrix}
=I_4-2\begin{bmatrix}\tilde{Y}^{\top}\\ \tilde{X}^{\top}\end{bmatrix}\begin{bmatrix}\tilde{Y}&\tilde{X}\end{bmatrix}.
\end{align*}
From the above equation, it obviously holds that $\phi_1, \phi_2$ are the two nonnegative eigenvalues of
$\begin{bmatrix}\tilde{X}^{\top}\tilde{X}-\tilde{Y}^{\top}\tilde{Y}&-(\tilde{X}^{\top}\tilde{Y}+\tilde{Y}^{\top}\tilde{X})\\
-(\tilde{X}^{\top}\tilde{Y}+\tilde{Y}^{\top}\tilde{X})&\tilde{Y}^{\top}\tilde{Y}-\tilde{X}^{\top}\tilde{X}\end{bmatrix}$
with $\begin{bmatrix}p_1\\q_1\end{bmatrix}$, $\begin{bmatrix}p_2\\q_2\end{bmatrix}$ being the corresponding eigenvectors.
Note that $\begin{bmatrix}\tilde{X}^{\top}\tilde{X}-\tilde{Y}^{\top}\tilde{Y}&-(\tilde{X}^{\top}\tilde{Y}+\tilde{Y}^{\top}\tilde{X})\\
-(\tilde{X}^{\top}\tilde{Y}+\tilde{Y}^{\top}\tilde{X})&\tilde{Y}^{\top}\tilde{Y}-\tilde{X}^{\top}\tilde{X}\end{bmatrix}$
is a Hamiltonian matrix, thus  the results follow  immediately from Lemma \ref{Lemma3.1} and Lemma \ref{Lemma3.2}.
\end{proof}

Now by defining
\begin{align}\label{trans}
\begin{bmatrix}\mu_1& \mu_2& \nu_1& \nu_2\end{bmatrix}^{\top}=\Omega^{\top}
\begin{bmatrix}\gamma_1& \gamma_2& \zeta_1& \zeta_2\end{bmatrix}^{\top},
\end{align}
we have
\begin{align}\label{xdoty-xdotxplusydoty}
x^{\top}y+y^{\top}x=2\phi_1\mu_1\nu_1+2\phi_2\mu_2\nu_2, \qquad
x^{\top}x-y^{\top}y=\phi_1(\mu_1^2-\nu_1^2)+\phi_2(\mu_2^2-\nu_2^2).
\end{align}

\begin{Theorem}\label{Theorem3.2}
With the notations above, there exist $\mu_1, \mu_2, \nu_1, \nu_2\in\mathbb{R}$ such that
$x^{\top}y=0$ and $\|x\|_2=\|y\|_2=\frac{\sqrt{2}}{2}$. For these $\mu_1, \mu_2, \nu_1, \nu_2$,  let $\gamma_1, \gamma_2, \zeta_1, \zeta_2$ be computed from \eqref{trans}, where $\Omega$ is as in \eqref{Omega}. Then $x_{j+1}=x, x_{j+2}=y, v_{j+1}=\mbox{Re}(w)$ and $v_{j+2}=\mbox{Im}(w)$, where $w$ is computed by \eqref{xyw}, satisfy the constrains \eqref{eqcomplex-opt-2-constraina}-\eqref{eqcomplex-opt-2-constrainc},
and the value of the corresponding objective function in \eqref{eqcomplex-opt-2} will be no larger than $\frac{2(1-\sigma_2^2)}{\sigma_2^2}$.
\end{Theorem}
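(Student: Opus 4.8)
The plan is to produce an explicit feasible point of \eqref{opt2} built from the two dominant right singular directions $Ve_1,Ve_2$ of $S_1$, and then bound its objective value; concretely I take $b\in\subspan\{Ve_1,Ve_2\}$ as in \eqref{tildexy}--\eqref{xyw} and choose the free parameters so that the resulting $x=\mbox{Re}(z)$, $y=\mbox{Im}(z)$ are orthogonal of equal length.

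I would first settle the existence assertion. By \eqref{xdoty-xdotxplusydoty} together with Lemma~\ref{Lemma3.3}, imposing $x^{\top}y=0$ and $x^{\top}x-y^{\top}y=0$ amounts to choosing $\mu_1,\mu_2,\nu_1,\nu_2\in\mathbb{R}$ with $\mu_1^2+\mu_2^2+\nu_1^2+\nu_2^2=1$ such that $\phi_1\mu_1\nu_1+\phi_2\mu_2\nu_2=0$ and $\phi_1(\mu_1^2-\nu_1^2)+\phi_2(\mu_2^2-\nu_2^2)=0$. The key trick is the substitution $\xi_k=\mu_k+i\nu_k$: the two real equations then collapse to the single complex identity $\phi_1\xi_1^2+\phi_2\xi_2^2=0$ and the normalization becomes $|\xi_1|^2+|\xi_2|^2=1$. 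Since $\phi_1\ge\phi_2\ge0$ by Lemma~\ref{Lemma3.3}, such $\xi_1,\xi_2$ always exist: if $\phi_1=\phi_2=0$ any unit pair works, and otherwise one checks directly that $\xi_1=i\sqrt{\phi_2/(\phi_1+\phi_2)}$, $\xi_2=\sqrt{\phi_1/(\phi_1+\phi_2)}$ solves both. Reading off $\mu_k,\nu_k$ and then $\gamma_k,\zeta_k$ from \eqref{trans} (with $\Omega$ orthogonal, as in Lemma~\ref{Lemma3.3}) gives $\gamma_1^2+\gamma_2^2+\zeta_1^2+\zeta_2^2=1$, and hence $x,y,w$ via \eqref{xyw} satisfying $x^{\top}y=0$ and $\|x\|_2^2+\|y\|_2^2=\|z\|_2^2$.

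Next I would verify feasibility. Because $\begin{bmatrix}z^{\top}&w^{\top}\end{bmatrix}^{\top}=Sb\in\mathcal{N}(M_{j+1})$, the characterization of $\mathcal{N}(M_{j+1})$ given after \eqref{Mcomplex} shows that $x_{j+1}=\mbox{Re}(z)=x$, $x_{j+2}=\mbox{Im}(z)=y$, $v_{j+1}=\mbox{Re}(w)$, $v_{j+2}=\mbox{Im}(w)$ satisfy \eqref{eqcomplex-opt-2-constraina}--\eqref{eqcomplex-opt-2-constrainb}. Since $z_1,z_2$ are orthonormal columns of $U$ and $|\gamma_1+i\zeta_1|^2+|\gamma_2+i\zeta_2|^2=1$, we have $\|z\|_2=1$, hence $\|x\|_2^2+\|y\|_2^2=1$; combined with $\|x\|_2=\|y\|_2$ from the previous step this forces $\|x\|_2=\|y\|_2=\frac{\sqrt{2}}{2}$, so \eqref{eqcomplex-opt-2-constrainc} holds with $\delta_1=\delta_2=\sqrt{2}$.

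Finally I would bound the objective. With $\delta_1=\delta_2=\sqrt{2}$ the term $\beta_{j+1}^2(\delta_1/\delta_2-\delta_2/\delta_1)^2$ vanishes and $\|\delta_1v_{j+1}\|_2^2+\|\delta_2v_{j+2}\|_2^2=2(\|\mbox{Re}(w)\|_2^2+\|\mbox{Im}(w)\|_2^2)=2\|w\|_2^2$, so \eqref{eqcomplex-opt-2} equals $2\|w\|_2^2$. Since $S$ has orthonormal columns, $\|w\|_2^2=\|Sb\|_2^2-\|z\|_2^2=\|b\|_2^2-1$, and since $Ve_1,Ve_2$ are orthonormal, $\|b\|_2^2=\frac{\gamma_1^2+\zeta_1^2}{\sigma_1^2}+\frac{\gamma_2^2+\zeta_2^2}{\sigma_2^2}\le\frac{1}{\sigma_2^2}(\gamma_1^2+\gamma_2^2+\zeta_1^2+\zeta_2^2)=\frac{1}{\sigma_2^2}$ because $\sigma_1\ge\sigma_2$; hence $\|w\|_2^2\le\frac{1-\sigma_2^2}{\sigma_2^2}$ and the objective is at most $\frac{2(1-\sigma_2^2)}{\sigma_2^2}$. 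The step I expect to be the crux is the existence argument: the decoupling $\xi_k=\mu_k+i\nu_k$, which turns two coupled quadratic constraints into one scalar complex equation whose solvability is immediate from $\phi_1\ge\phi_2\ge0$; once that is in place, the feasibility checks and the norm estimate are routine consequences of the SVD of $S_1$ and the orthonormality of $S$ and $\Omega$.
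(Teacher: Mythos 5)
Your proof is correct and follows essentially the same route as the paper: take $b\in\subspan\{Ve_1,Ve_2\}$, use Lemma~\ref{Lemma3.3} to reduce the orthogonality and equal-norm requirements to the system the paper writes as \eqref{eqtwominimal}, note feasibility from $\begin{bmatrix}z^{\top}&w^{\top}\end{bmatrix}^{\top}\in\mathcal{N}(M_{j+1})$, and bound the objective $2\|w\|_2^2$ by $\frac{2(1-\sigma_2^2)}{\sigma_2^2}$. The differences are cosmetic rather than substantive: your substitution $\xi_k=\mu_k+i\nu_k$ exhibits a solution (and covers the degenerate case $\phi_1=\phi_2=0$) where the paper writes out the full solution family \eqref{munu1}, your identity $\|w\|_2^2=\|b\|_2^2-1$ replaces the paper's termwise expansion of $\|w\|_2^2$, and your value $\delta_1=\delta_2=\sqrt{2}$ is the reading consistent with \eqref{eqcomplex-opt-2-constrainc} (the paper's $\frac{\sqrt{2}}{2}$ at that point is a slip that does not affect the computation $2\|w\|_2^2$).
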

\begin{proof} It is easy to check that
all solutions of the following system of equations
\begin{align}\label{eqtwominimal}
\left\{\begin{array}{ll}
\phi_1\mu_1\nu_1+\phi_2\mu_2\nu_2&=0,\\
\phi_1(\mu_1^2-\nu_1^2)+\phi_2(\mu_2^2-\nu_2^2)&=0,\\
\mu_1^2+\mu_2^2+\nu_1^2+\nu_2^2&=1.
\end{array}\right.
\end{align}
are
\begin{equation}\label{munu1}
\begin{array}{lll}
\left\{\begin{split}
\mu_2&=\pm\sqrt{\frac{\phi_1}{\phi_1+\phi_2}-\nu_2^2}\\
\mu_1&=-\sqrt{\frac{\phi_2}{\phi_1}}\nu_2\\
\nu_1&=\pm\sqrt{\frac{\phi_2}{\phi_1+\phi_2}-\frac{\phi_2}{\phi_1}\nu_2^2}
\end{split}\right.& \textrm{and}&
\left\{\begin{split}
\mu_2&=\pm\sqrt{\frac{\phi_1}{\phi_1+\phi_2}-\nu_2^2}\\
\mu_1&=\sqrt{\frac{\phi_2}{\phi_1}}\nu_2\\
\nu_1&=\mp\sqrt{\frac{\phi_2}{\phi_1+\phi_2}-\frac{\phi_2}{\phi_1}\nu_2^2}
\end{split}\right.
\end{array}
\end{equation}
with $\nu_2^2\leq\frac{\phi_1}{\phi_1+\phi_2}$.
Note \eqref{xdoty-xdotxplusydoty} and $\|x\|_2^2+\|y\|_2^2=1$, so with the values in \eqref{munu1}, it holds that
$x^{\top}y=0$ and $\|x\|_2=\|y\|_2=\frac{\sqrt{2}}{2}$.
Since $\begin{bmatrix}z^\top&w^\top\end{bmatrix}^\top\in\mathcal{N}(M_{j+1})$, so
$\begin{bmatrix}\begin{smallmatrix}x_{j+1}&\ &x_{j+2}\\v_{j+1}&\ &v_{j+2}\end{smallmatrix}\end{bmatrix}=
\begin{bmatrix}\begin{smallmatrix}x&\ &y\\ \mbox{Re}(w)& \ &\mbox{Im}(w)\end{smallmatrix}\end{bmatrix}$ satisfy the constrains \eqref{eqcomplex-opt-2-constraina}-\eqref{eqcomplex-opt-2-constrainc} with
$\delta_1=\delta_2=\frac{\sqrt{2}}{2}$. Hence
\begin{align*}
&\|\delta_1v_{j+1}\|_2^2 + \|\delta_2v_{j+2}\|_2^2 + \beta_{j+1}^2 (\frac{\delta_1}{\delta_2} -\frac{\delta_2}{\delta_1})^2\\
=&2\|w\|_2^2=2(\gamma_1^2+\zeta_1^2)\frac{1-\sigma_1^2}{\sigma_1^2}+
2(\gamma_2^2+\zeta_2^2)\frac{1-\sigma_2^2}{\sigma_2^2}\leq\frac{2(1-\sigma_2^2)}{\sigma_2^2},
\end{align*}
which completes the proof of the theorem.
\end{proof}

From the proof of Theorem \ref{Theorem3.2} we can see that with such choice of $x_{j+1},x_{j+2},v_{j+1},v_{j+2}$, the value of the corresponding objective function is just $2\|w\|_2^2$. Define
$\xi_1=p_1^{\top}\Xi p_1, \xi_2=p_2^{\top}\Xi p_2, \eta_1=q_1^{\top}\Xi q_1, \eta_2=q_2^{\top}\Xi q_2,
\zeta_{12}=q_1^{\top}\Xi p_2, \zeta_{21}=q_2^{\top}\Xi p_1$,
with $\Xi=\diag\{ (1-\sigma_1^2)/\sigma_1^2, (1-\sigma_2^2)/\sigma_2^2\}$, it then follows
\begin{align}\label{lengthw}
\|w\|_2^2= \left\{ \begin{array}{ll}
\frac{\phi_2}{\phi_1+\phi_2}(\xi_1+\eta_1)+\frac{\phi_1}{\phi_1+\phi_2}(\xi_2+\eta_2)+
2\sqrt{\frac{\phi_2}{\phi_1}}\frac{\phi_1}{\phi_1+\phi_2}(\zeta_{21}-\zeta_{12}) & \textrm{if} \ \ (\mu_1\nu_2)\leq0,\\
\frac{\phi_2}{\phi_1+\phi_2}(\xi_1+\eta_1)+\frac{\phi_1}{\phi_1+\phi_2}(\xi_2+\eta_2)+
2\sqrt{\frac{\phi_2}{\phi_1}}\frac{\phi_1}{\phi_1+\phi_2}(\zeta_{12}-\zeta_{21}) & \textrm{if} \ \  (\mu_1\nu_2)>0.\\
\end{array} \right.
\end{align}
So in order to get a smaller $\|w\|_2$, we can take $\mu_1, \mu_2, \nu_1, \nu_2$ satisfying $\mu_1\nu_2\leq0$ if
$\zeta_{21}\leq\zeta_{12}$, and $\mu_1\nu_2>0$ if $\zeta_{21}>\zeta_{12}$.

Till now we have proposed two strategies for computing $x_{j+1}, x_{j+2}, v_{j+1}, v_{j+2}$.
The first strategy computes $x_{j+1}, x_{j+2}, v_{j+1}$ and $v_{j+2}$ by using the Jacobi orthogonal process \eqref{Jocobi_x} and \eqref{Jocobi_v} with $z=u_1$ and $w=\frac{S_2Ve_1}{\sigma_1}$. While the  second one first computes $\mu_1, \mu_2, \nu_1, \nu_2$ by \eqref{munu1} satisfying $\mu_1\nu_2\leq0$ if
$\zeta_{21}\leq\zeta_{12}$, and $\mu_1\nu_2>0$ if $\zeta_{21}>\zeta_{12}$, and then compute $\gamma_1, \gamma_2, \zeta_1, \zeta_2$ from \eqref{trans}, where $\Omega$ is as in \eqref{Omega}, and finally set $x_{j+1}=x, x_{j+2}=y, v_{j+1}=\mbox{Re}(w)$ and $v_{j+2}=\mbox{Im}(w)$, where $x,y,w$ are computed by \eqref{xyw}. We cannot tell which strategy is better. So we suggest to apply both strategies, compare the corresponding values of the objective function and adopt the one which gives better results. Specifically, if the value of the objective function corresponding to the first strategy is smaller, we would update
$X_j$ and $T_j$ as
\begin{align}\label{updatecomplex1}
X_{j+2}=\begin{bmatrix}X_j &\delta_1x_{j+1}&\delta_2x_{j+2}\end{bmatrix}\in\mathbb{R}^{n\times (j+2)},\qquad
T_{j+2}=\begin{bmatrix}T_j &\delta_1v_{j+1}&\delta_2v_{j+2}\\
0 &\alpha_{j+1}&\delta\beta_{j+1}\\0&-\frac{1}{\delta}\beta_{j+1}&\alpha_{j+1}\end{bmatrix}\in\mathbb{R}^{(j+2)\times (j+2)},
\end{align}
where $\delta_1=\frac{1}{\|x_{j+1}\|_2}, \delta_2=\frac{1}{\|x_{j+2}\|_2}, \delta=\frac{\delta_2}{\delta_1}$. Otherwise, we update $X_j$ and $T_j$ as
\begin{align}\label{updatecomplex2}
X_{j+2}=\begin{bmatrix}X_j &\sqrt{2}x&\sqrt{2}y\end{bmatrix}\in\mathbb{R}^{n\times (j+2)},\quad
T_{j+2}=\begin{bmatrix}T_j &\sqrt{2}\mbox{Re}(w)&\sqrt{2}\mbox{Im}(w)\\0 &\alpha_{j+1}&\beta_{j+1}\\0&-\beta_{j+1}&\alpha_{j+1}\end{bmatrix}\in\mathbb{R}^{(j+2)\times(j+2)},
\end{align}
with $x, y$ and $w$ defined as in \eqref{xyw}. This completes the assignment of the complex
conjugate poles $\lambda_{j+1},\lambda_{j+2}=\bar{\lambda}_{j+1}$, and we can then continue with the next pole $\lambda_{j+3}$.

These two strategies essentially choose $z$ from $\mathcal{R}(u_1)$ and $\mathcal{R}(\begin{bmatrix}u_1&u_2\end{bmatrix})$, respectively. If the results by these two strategies are not satisfactory, theoretically, we can choose $z$ from a higher dimensional space, i.e. $z\in \subspan\{u_1, u_2, \ldots, u_k\}, k\geq3$, with $u_l$ being the $l$-th column of $U$.
However the resulted optimization problem is much more complicated. More importantly, numerical examples show that these two strategies with $k=1,2$ can produce fairly satisfying results for most problems.

\subsection{Algorithm}
In this part, we give the framework of our algorithm.
\begin{algorithm}
\renewcommand{\algorithmicrequire}{\textbf{Input:}}
\renewcommand\algorithmicensure {\textbf{Output:} }
\caption{ Framework of our {\bf Schur-rob} algorithm.}
\label{alg:Framwork}
\begin{algorithmic}[1]
\REQUIRE ~~\\
$A, B$ and $\mathfrak{L}=\{\lambda_1,\dots,\lambda_n\}$ (complex conjugate poles appear in pairs).
\ENSURE ~~\\
The feedback matrix $F$.
\STATE If $\lambda_1$ is real, compute $x_1$ by \eqref{x1real} and set $X_1=x_1,T_1=\lambda_1,j=1$.
If $\lambda_1$ is non-real, compute $x_1,x_2$ by  \eqref{x1x2get}, \eqref{gammamunu}, \eqref{munuinitial},  and set $X_2,T_2$ as in
\eqref{initialnonreal}, $j=2$. \label{code:fram:extract}
\WHILE{$j<n$}
\IF{ $\lambda_{j+1}$ is real}
\STATE Find $S=\begin{bmatrix}S_1^{\top}&S_2^{\top}\end{bmatrix}^{\top}$, whose columns form an orthonormal basis of $\mathcal{N}(M_{j+1})$ in \eqref{M};
\STATE Compute $y$ by \eqref{eqreal-opt-equal-3};
\STATE Compute $x_{j+1}$ and $v_{j+1}$ by \eqref{eqrealxv}, update $X_j$ and $T_j$ as
\eqref{updatereal} and set $j=j+1$.
\ELSE
   \STATE Find $S=\begin{bmatrix}S_1^{\top}&S_2^{\top}\end{bmatrix}^{\top}$, whose columns form an orthonormal basis of $\mathcal{N}(M_{j+1})$ in \eqref{Mcomplex};
   \STATE Compute the SVD of $S_1$ as $S_1=U\Sigma V^{*}$;
   \IF{ $\mbox{Re}(Ue_1)$ and $\mbox{Im}(Ue_1)$ are linearly independent}
      \STATE Compute $x_{j+1}, x_{j+2}, v_{j+1}, v_{j+2}$ by \eqref{Jocobi_x} and \eqref{Jocobi_v} with $z=\frac{S_1Ve_1}{\sigma_1}, w=\frac{S_2Ve_1}{\sigma_1}$;
      \STATE Set $\delta_1=\frac{1}{\|x_{j+1}\|_2}, \delta_2=\frac{1}{\|x_{j+2}\|_2}$ and $\delta=\frac{\delta_2}{\delta_1}$;
      \STATE Compute $dep_1=\|\delta_1v_{j+1}\|_2^2+\|\delta_2v_{j+2}\|_2^2+\beta_{j+1}^2(\delta-\frac{1}{\delta})^2$;
   \ELSE
   \STATE  Set $dep_1=\infty$;
   \ENDIF
\STATE Let $\tilde{X}=\begin{bmatrix}\tilde{x}_1&\tilde{x}_2\end{bmatrix}$, $\tilde{Y}=\begin{bmatrix}\tilde{y}_1&\tilde{y}_2\end{bmatrix}$  with $\tilde{x}_1, \tilde{y}_1, \tilde{x}_2, \tilde{y}_2$ defined as in \eqref{tildexy}, and compute the spectral decomposition \eqref{specdecom};
\STATE Compute $\mu_1, \mu_2, \nu_1, \nu_2$ by \eqref{munu1} satisfying $\mu_1\nu_2\leq0$ if
$\zeta_{21}\leq\zeta_{12}$, and $\mu_1\nu_2>0$ if $\zeta_{21}>\zeta_{12}$, and then compute $\gamma_1, \gamma_2, \zeta_1, \zeta_2$ from \eqref{trans}, where $\Omega$ is as in \eqref{Omega};
\STATE Compute $z$, $w$ by \eqref{xyw}, set $x_{j+1}=\mbox{Re}(z), x_{j+2}=\mbox{Im}(z), v_{j+1}=\mbox{Re}(w)$ and $v_{j+2}=\mbox{Im}(w)$.
 Compute
$dep_2=2[(\gamma_1^2+\zeta_1^2)\frac{1-\sigma_1^2}{\sigma_1^2}+(\gamma_2^2+\zeta_2^2)\frac{1-\sigma_2^2}{\sigma_2^2}]$;
\STATE If $dep_1<dep_2$, update $X_{j}$ and $T_{j}$ as in \eqref{updatecomplex1}; otherwise, update them as in \eqref{updatecomplex2}. Set $j=j+2$.
\ENDIF
\ENDWHILE
\STATE Set $X=X_n, T=T_n$, and compute $F$ by \eqref{eqsolveoff}.
\end{algorithmic}
\end{algorithm}


\section{Numerical Examples}

In this section, we give some numerical examples to illustrate the performance of our \textbf{Schur-rob} algorithm,
and compare it with some of the different versions of \textbf{SCHUR} in \cite{Chu2}, the MATLAB functions \textbf{robpole} \cite{Tits} and \textbf{place} \cite{KNV}.
Each algorithm computes a feedback matrix $F$ such that the eigenvalues of $A+BF$ are those given in $\mathfrak{L}$, and $A+BF$ is robust.
When applying \textbf{robpole} to all test examples, we set the maximum number of sweep to be the default value $5$.
All calculations are carried out on  an Intel\textregistered    Core\texttrademark i3, dual core, 2.27 GHz machine, with $2.00$ GB RAM.
MATLAB R2012a is used with machine epsilon $\epsilon\approx 2.2\times 10^{-16}$.

With $\lambda_1\in\mathbb{R}$ fixed, the choice of $x_1$ in \textbf{Schur-rob} ignores the freedom of $x_1$. Inspired by \textbf{O-SCHUR} \cite{Chu2}, we may regard $x_1$ as a free parameter and manage to optimize the robustness. Specifically, we may run \textbf{Schur-rob} with several different choices of $x_1$, and keep the solution $F$  corresponding to the minimum departure from normality. We denote such method  as ``\textbf{O-Schur-rob}".

In this section, results on precision and robustness obtained by different algorithms are displayed. Here the precision refers to the accuracy of the eigenvalues of computed $A_c=A+BF$, compared with the prescribed poles in $\mathfrak{L}$. Precisely, we list
\[
precs=\left\lfloor\min_{1\le j\le n}(-\log(|\frac{\lambda_j-\hat{\lambda}_j}{\lambda_j}|))
\right\rfloor,
\]
where $\hat{\lambda}_j, j=1, \ldots, n$ are eigenvalues of computed  $A_c=A+BF$.
Larger values of $precs$ indicate more accurate computed eigenvalues.
The robustness is, however, more complicated, since different measures of
robustness are used in these algorithms.
Specifically, let the spectral decomposition and the real Schur decomposition of $A+BF$ respectively be
\[
A+BF=X\Lambda X^{-1},\qquad A+BF=UTU^\top,
\]
where $\Lambda$ is a diagonal matrix whose diagonal elements are those in $\mathfrak{L}$, $U$ is orthogonal, and $T$ is the real Schur form. The MATLAB function \textbf{place} tends to minimize $\|X^{-1}\|_F$  and \textbf{robpole} aims to maximum $|\det(X)|$.
Both measures are closely related to the condition number $\kappa_F(X)=\|X\|_F\|X^{-1}\|_F$.
While different versions of \textbf{SCHUR} \cite{Chu2}
and our \textbf{Schur-rob} try to minimize the departure from normality of $A_c=A+BF$. Hence, in the following tests,
we adopt the following two  measures of robustness:  the departure from normality of $A_c$ (denoted as ``dep.") and the
condition number of $X$ (denoted as $``\kappa_F(X)"$).

\begin{Example}{\rm~  Let
\begin{align*}
\begin{array}{ll}
A=\begin{bmatrix}1&0&0\\ 0&I_{n-2}&0\\0& 0.5\times e^\top& 0.5 \end{bmatrix}, &
B=\begin{bmatrix}I_{n-1}\\0\end{bmatrix}, \\
\mathfrak{L}=\{ randn(1, n-2), \  0.5+ki,\  0.5-ki\},
\end{array}
\end{align*}
where $e^\top$ is the row vector with its all entries being $1$, ``$randn(1, n-2)$" is a row vector of dimension $n-2$,
generated by the MATLAB function \textbf{randn}. We set $k$ as $1e+1, 1e+2, 1e+3, 1e+4, 1e+5$, and apply the four algorithms \textbf{SCHUR},  \textbf{SCHUR-D}, \textbf{O-SCHUR} and \textbf{Schur-rob} on these examples,
where ``\textbf{SCHUR-D}" denotes the algorithm combining the $D_k$ varying strategy in \cite{Chu2} with
\textbf{SCHUR}. In \cite{Chu2}, the author points out that
minimizing the departure from normality via the $D_k$ varying
technique can be achieved by optimizing the condition
number of $X^\top X$ or $X$, which actually is hard to realize. So here,
the numerical results associated with ``\textbf{SCHUR-D}" are obtained by taking
many different vectors from the null space of $(6)$ in \cite{Chu2}, which lead to orthogonal columns in $X$ when placing
complex conjugate poles, and adopting the
one owning the minimal departure from normality as the solution to the {\bf SFRPA}.
All numerical results are summarized in Table~\ref{table_add}, which shows that our algorithm
outperforms \textbf{SCHUR} and \textbf{O-SCHUR}  on these examples with complex conjugate poles to be assigned.

\tabcolsep 0.0001in
\begin{table}
\centering
\begin{tabular}{c|c|c|c|c|c|c|c|c}
\Xhline{2pt}
\multirow{2}*{$(n,k)$} & \multicolumn{4}{c|}{$dep.$}&\multicolumn{4}{c}{$precs$} \\
\Xcline{2-9}{0.1pt}
& \textbf{SCHUR} & \textbf{SCHUR-D}& \textbf{O-SCHUR}&\textbf{Schur-rob} & \textbf{SCHUR} & \textbf{SCHUR-D}&\textbf{O-SCHUR}&\textbf{Schur-rob} \\
\Xhline{1pt}
(4, 1e+1)&9.5e+1&  2.2e+1 &4.3e+1 & 2.7e+0 & 14&14 & 14&15\\
(4, 1e+2)&1.5e+4 &8.2e+2 & 1.4e+4&  3.3e+2 & 11& 13&11 & 14\\
(4, 1e+3)&1.4e+6& 6.6e+4 &1.2e+6 & 6.6e+2&7&8 &7 & 10\\
(4, 1e+4)&2.9e+8&9.9e+5 &4.3e+7&1.0e+4 &4& 10&6 &13 \\
(4, 1e+5)&1.8e+10&7.3e+6 &1.2e+10 &3.8e+5 & 3& 7 &3 & 10\\
\Xhline{1pt}
(20,1e+1)&4.0e+1& 7.6e+0&1.7e+1 &4.6e+0 & 13&14&14 &14 \\
(20,1e+2)& 7.7e+4& 2.6e+2&2.4e+2&1.8e+1 & 9 &12&11 &12 \\
(20,1e+3)&2.0e+5&4.4e+3& 9.3e+4&4.7e+2 &9& 11&10&12 \\
(20,1e+4)&3.2e+7& 2.4e+4&5.2e+6 &1.9e+3 & 6&10& 8& 11  \\
(20,1e+5)&1.7e+9& 1.2e+6&8.8e+8 &6.0e+4 & 3&9&6 &10\\
\Xhline{1pt}
(50,1e+1)& 1.1e+1& 2.9e+0&4.4e+0 &4.4e+0 & 13& 12 &13 &13  \\
(50,1e+2)& 2.0e+4& 5.9e+2&8.8e+2 &1.8e+1 & 10& 12 & 11 & 12 \\
(50,1e+3)& 1.1e+6& 7.8e+2&5.8e+4 &5.5e+2 & 8 &11& 9 &12 \\
(50,1e+4)& 8.8e+7&3.2e+4&9.6e+6 &2.1e+3 & 6 &10&7 & 11 \\
(50,1e+5)& 8.4e+9& 2.0e+5&4.8e+8 &3.7e+4 & 3& 9 & 5&10 \\
\Xhline{2pt}
\end{tabular}
\caption{Numerical results for Example 4.1}
\label{table_add}
\end{table}
}\end{Example}

We now compare our \textbf{Schur-rob},  \textbf{O-Schur-rob} algorithms
with the MATLAB functions \textbf{place}, \textbf{robpole} and
the \textbf{SCHUR}, \textbf{O-SCHUR} algorithms by applying them on some benchmark sets.
The tested benchmark sets include eleven illustrated examples from \cite{BN}, ten multi-input CARE examples and nine
multi-input DARE examples in benchmark collections \cite{AB, AB2}.
All examples are numbered in the order as they appear in the references.

\begin{Example}{\rm~
The first benchmark set includes eleven small examples from \cite{BN}. Applying the six algorithms on these examples,
all algorithms produce comparable precisions of the assigned poles, which are greater than $10$, and we omit the results here.
Table~\ref{table_1} lists two measures of robustness, i.e. $dep.$ and $\kappa_F(X)$, for five examples.
The results are generally comparable.
The remaining six examples are not displayed in the table,
as the results of the six algorithms applying on these examples are quite similar.

\tabcolsep 0.02in
\begin{table}
\centering
\begin{tabular}{c|c|c|c|c|c|c}
\Xhline{2pt}
\multicolumn{2}{c|}{$num.$} & 5 & 7& 8& 9& 11\\
\Xcline{1-7}{1pt}
\multirow{6}*{$dep.$}&\textbf{place}& 7.4e-1 &3.5e+0 &1.3e+1 &1.2e+1 & 2.5e-3 \\
 & \textbf{robpole} &7.4e-1  & 3.4e+0&5.0e+0 &1.2e+1 & 3.6e-1 \\
& \textbf{SCHUR} & 7.2e-1&7.2e+0 &7.0e+0 &1.9e+1 & 2.3e+0  \\
& \textbf{O-SCHUR}  & 7.1e-1& 4.8e+0& 6.0e+0&1.7e+1 & 6.0e-1  \\
& \textbf{Schur-rob} &7.2e-1 &3.7e+0 &7.5e+0 &1.8e+1 & 2.4e-1\\
& \textbf{O-Schur-rob} &7.1e-1 &3.2e+0 &3.3e+0 &1.1e+1 & 1.4e-1\\
\Xcline{1-7}{1pt}
\multirow{6}*{$\kappa_F(X)$}
&\textbf{place}& 1.5e+2& 1.2e+1&3.7e+1& 2.4e+1&4.0e+0 \\
 & \textbf{robpole}&1.5e+2&1.2e+1& 6.2e+0&2.4e+1&4.1e+0\\
& \textbf{SCHUR} &2.7e+3&1.3e+2& 1.1e+1&5.6e+1&6.0e+0 \\
& \textbf{O-SCHUR} & 1.1e+3&4.5e+1&7.5e+0&5.5e+1&4.1e+0\\
& \textbf{Schur-rob} &1.9e+3&2.5e+1&1.2e+1&5.8e+1&4.1e+0\\
& \textbf{O-Schur-rob} &1.2e+3&2.2e+1&9.6e+0&3.3e+1&4.0e+0\\
\Xcline{1-7}{1pt}
\Xhline{2pt}
\end{tabular}
\caption{Robustness of the closed-loop system for the examples from \cite{BN}}
\label{table_1}
\end{table}

Now we apply the six algorithms on ten CARE and nine DARE examples from the
SLICOT CARE/DARE benchmark collections \cite{AB, AB2}.
Table~\ref{table_2} to Table~\ref{table_5} present the numerical results, respectively.
The ``-"s  in the first columns in Table~\ref{table_3} and Table~\ref{table_5}
corresponding to \textbf{place}, \textbf{robpole}, \textbf{SCHUR} and \textbf{O-SCHUR}
mean that all four algorithms fail to output a solution, since the multiplicity of some pole is greater than $m$.
Note that the $``precs"$ in the last six columns associated with \textbf{SCHUR} and \textbf{O-SCHUR} in Table~\ref{table_2}
and those in the third and eighth columns in Table~\ref{table_3} are also `` -"s, which suggest that there exists at least one
eigenvalue of $A+BF$, which owns no relative accuracy compared with the assigned poles.
From Table~\ref{table_2}, we know that the relative accuracy $``precs"$ of the poles in example $4$ and $5$
corresponding to \textbf{Schur-rob} and \textbf{O-Schur-rob} are lower than those produced by \textbf{place} and \textbf{robpole}.
And the reason is that there are semi-simple eigenvalues in both examples.
So how to dispose the issue that semi-simple eigenvalues can achieve higher relative accuracy deserves further exploration
and we will treat it in a separate paper.
For the sixth column in Table~\ref{table_2}, $``precs"$ from our algorithms are also smaller than those obtained
from \textbf{place} and \textbf{robpole} for the existence of poles which are relatively badly
separated from the imaginary axis.
And this is a weakness of our algorithm.

\begin{table}
\begin{minipage}{0.5\textwidth}
    \centering
    \begin{tabular}{ccccccccccc}
\Xhline{2pt}
\multicolumn{10}{c}{ $precs$}\\
\Xhline{0.4pt}
& $1$& $2$&$3$ & $4$& $5$ & $6$& $7$&  $8$& $9$&$10$\\
\textbf{place}&14&14&11&11&11&9	&14&11&	13&11\\
\textbf{robpole}&14&14&12&13&12&11&14&14&13&10\\
\textbf{SCHUR}&12&13&9&6&-&-&-&-&-&-\\
\textbf{O-SCHUR}&14&16&10&7&-&-&-&-&-&-\\
\textbf{Schur-rob}&14&14&12&8&9&6&14&14&12&9\\
\textbf{O-Schur-rob}&15&15&13&8&9&6&14&14&12&9\\
\Xhline{2pt}
\end{tabular}
\caption{Accuracy for CARE examples}
\label{table_2}
  \end{minipage}
  \begin{minipage}{0.5\textwidth}
    \centering
    \begin{tabular}{cccccccccc}
\Xhline{2pt}
\multicolumn{10}{c}{ $precs$}\\
\Xhline{0.4pt}
& $1$& $2$&$3$ & $4$& $5$ & $6$& $7$&  $8$& $9$\\
\textbf{place}&-&15&14&	14&7&11	&5&	-&	13\\
\textbf{robpole}&-&15&14&14&7&11&1&-&13\\
\textbf{SCHUR}&-&1&-&14&7&8&1&-&12\\
\textbf{O-SCHUR}&-&1&-&14&8&9&2&-&15\\
\textbf{Schur-rob}&15&15&15&15&8&10&4&-&12\\
\textbf{O-Schur-rob}&15&15&15&15&8&10&4&-&13\\
\Xhline{2pt}
\end{tabular}
    \caption{Accuracy for DARE examples}
    \label{table_3}
  \end{minipage}%
\end{table}

\tabcolsep 0.001in
\begin{table}
\centering
\begin{tabular}{c|c|c|c|c|c|c|c|c|c|c|c}
\Xhline{2pt}
\multicolumn{2}{c|}{$num.$} & 1& 2& 3& 4& 5& 6 & 7& 8& 9& 10\\
\Xcline{1-12}{1pt}
\multirow{5}*{$dep.$}
&\textbf{place}&5.2e+0&3.0e-1&7.3e+2&1.5e+6&2.9e+6&2.3e+7&7.6e+0&2.2e+1&6.1e+0&4.9e+9  \\
 & \textbf{robpole}& 5.2e+0&2.9e-1&5.7e+2& 7.5e+5&2.9e+6&2.3e+7&8.1e+0&2.0e+1& 6.0e+0&3.8e+9 \\
& \textbf{SCHUR} &8.4e+1& 7.2e+0&5.0e+2&1.7e+6&3.0e+9 &5.3e+7&6.2e+1& 8.9e+2&7.5e+0& 4.4e+17\\
& \textbf{O-SCHUR} &4.7e+1&2.6e+0& 3.8e+2& 8.0e+5&5.4e+8 &2.6e+7  &7.3e+0&1.7e+2&6.8e+0& 2.3e+17  \\
& \textbf{Schur-rob}&7.6e+0&3.0e-1& 1.4e+2&1.1e+5&7.3e+6&2.3e+7&7.5e+0&2.1e+1&8.4e+0&2.2e+10 \\
& \textbf{O-Schur-rob}&7.3e+0&2.6e-1& 1.4e+2&1.1e+5&2.5e+6&2.3e+7&6.8e+0&2.0e+1&6.8e+0&2.2e+10 \\
\Xcline{1-12}{1pt}
\multirow{5}*{$\kappa_F(X)$}
&\textbf{place}&7.4e+0&8.0e+0&4.3e+1&1.7e+15&8.5e+4&4.8e+6&1.6e+1&9.8e+1&1.5e+2&2.3e+6 \\
 & \textbf{robpole} &7.3e+0&8.0e+0&4.2e+1&2.2e+7&8.9e+4&3.2e+6&1.6e+1&9.0e+1&1.4e+2& 2.3e+6\\
& \textbf{SCHUR} &2.2e+2&1.0e+1&1.7e+3&9.1e+9& 6.0e+11&4.0e+13 &3.5e+8& 6.1e+9&1.3e+9& 4.6e+13\\
& \textbf{O-SCHUR} &1.2e+2& 5.1e+1&2.1e+3&1.0e+9&2.4e+10 &1.2e+8&1.0e+8& 3.7e+9& 4.1e+9&5.7e+13\\
& \textbf{Schur-rob} &1.1e+1& 8.2e+0&9.2e+2&9.0e+7&2.0e+6&3.2e+8&3.3e+1&5.7e+2&6.5e+3&4.3e+6\\
& \textbf{O-Schur-rob} &1.0e+1&8.0e+0&9.1e+2&6.5e+7&1.3e+6&1.2e+8&2.8e+1&4.2e+2&3.4e+3&4.3e+6\\
\Xcline{1-12}{1pt}
\Xhline{2pt}
\end{tabular}
\caption{Robustness of the closed-loop system matrix for ten CARE examples}
\label{table_4}
\end{table}

\tabcolsep 0.02in
\begin{table}
\centering
\begin{tabular}{c|c|c|c|c|c|c|c|c|c|c}
\Xhline{2pt}
\multicolumn{2}{c|}{$num.$} & 1& 2& 3& 4& 5& 6 & 7& 8& 9\\
\Xcline{1-11}{1pt}
\multirow{5}*{$dep.$}
&\textbf{place}&-&2.2e-1&3.9e-1&4.3e-1&1.7e+0&1.4e+0&2.3e+1&4.3e+7&8.9e+0 \\
 & \textbf{robpole}&-&2.2e-1&3.9e-1&3.6e-1&1.7e+0&1.3e+0&1.8e+1&3.9e+12& 8.0e+0\\
& \textbf{SCHUR} &-&4.1e-1&1.1e+2&5.9e-1&1.8e+0&1.1e+1&3.2e+2&3.4e+2&1.1e+1\\
& \textbf{O-SCHUR} &-&3.3e-1&4.9e+1&4.1e-1&1.7e+0&1.1e+0& 1.7e+2&1.2e+1&8.0e+0 \\
& \textbf{Schur-rob}&1.0e-1&2.5e-1&1.3e+0&3.4e-1& 1.7e+0&2.0e+0&1.9e+1&9.8e+0&9.9e+0\\
& \textbf{O-Schur-rob}&1.0e-1&2.5e-1&1.3e+0&3.4e-1& 1.7e+0&1.2e+0&1.8e+1&9.4e+0&6.6e+0\\
\Xcline{1-11}{1pt}
\multirow{5}*{$\kappa_F(X)$}
&\textbf{place}&-&5.2e+0& 4.9e+0&5.4e+0&1.8e+1&1.3e+1&2.3e+8&9.2e+292&3.4e+2\\
& \textbf{robpole} &-&5.2e+0&5.0e+0&5.3e+0&1.8e+1&1.2e+1&2.9e+8&1.3e+308&3.0e+2\\
& \textbf{SCHUR} &-&4.0e+7&1.2e+9&5.7e+0& 1.8e+1&5.8e+3&1.9e+11&2.8e+295&4.7e+3\\
& \textbf{O-SCHUR} &-&3.3e+7&8.0e+8&5.4e+0&1.8e+1&1.7e+3&2.0e+11&3.3e+295&2.6e+3\\
& \textbf{Schur-rob} &7.1e+15&5.5e+0&5.6e+0&7.2e+0&1.8e+1&3.8e+1&1.7e+9&5.6e+292&2.2e+4\\
& \textbf{O-Schur-rob} &2.5e+15&5.5e+0&5.5e+0&7.2e+0&1.8e+1&3.8e+1&1.2e+9&5.6e+292&4.7e+3\\
\Xcline{1-11}{1pt}
\Xhline{2pt}
\end{tabular}
\caption{Robustness of the closed-loop system matrix for nine DARE examples}
\label{table_5}
\end{table}
}\end{Example}

We now test the five methods \textbf{place}, \textbf{robpole},
\textbf{SCHUR}, \textbf{O-SCHUR} and \textbf{Schur-rob} on some random examples generated by the MATLAB function \textbf{randn}.

\begin{Example}{\rm~
This test set includes $33$ examples where
$n$ varies from 3 to 25 increased by 2, and $m$ is set to be $2, \lfloor\frac{n}{2}\rfloor, n-1$ for each $n$. The examples are generated as following. We first randomly generate the matrices $A, B$ and $F$ by the MATLAB function \textbf{randn}, and then get $\mathfrak{L}$ using the
MATLAB function \textbf{eig}, that is, $\mathfrak{L}=eig(A+BF)$. We then apply the five algorithms
on the $A,B$ and $\mathfrak{L}$ as input.

Fig.~\ref{fig1} to Fig.~\ref{fig4},  respectively exhibit the departure from normality of the computed $A_c$,
the condition number of the eigenvector matrix $X$,
the relative accuracy of the poles and the CPU time of the five algorithms
applied on these randomly generated examples.
In these figures, the $x$-axis represents the number of the $33$ different $(n,m)$. For example, $(3,2)$, $(5,2)$ and $(5,4)$ correspond to $1$, $2$ and $3$ in the $x$-axis, respectively.
And the values along the $y$-axis are the mean values over 50 trials for a certain $(n,m)$.

\begin{figure}[H]
\centering
\begin{tabular}{ccc}
    \begin{minipage}[t]{0.5\textwidth}
    \includegraphics[width=3.in,height=2.1in]{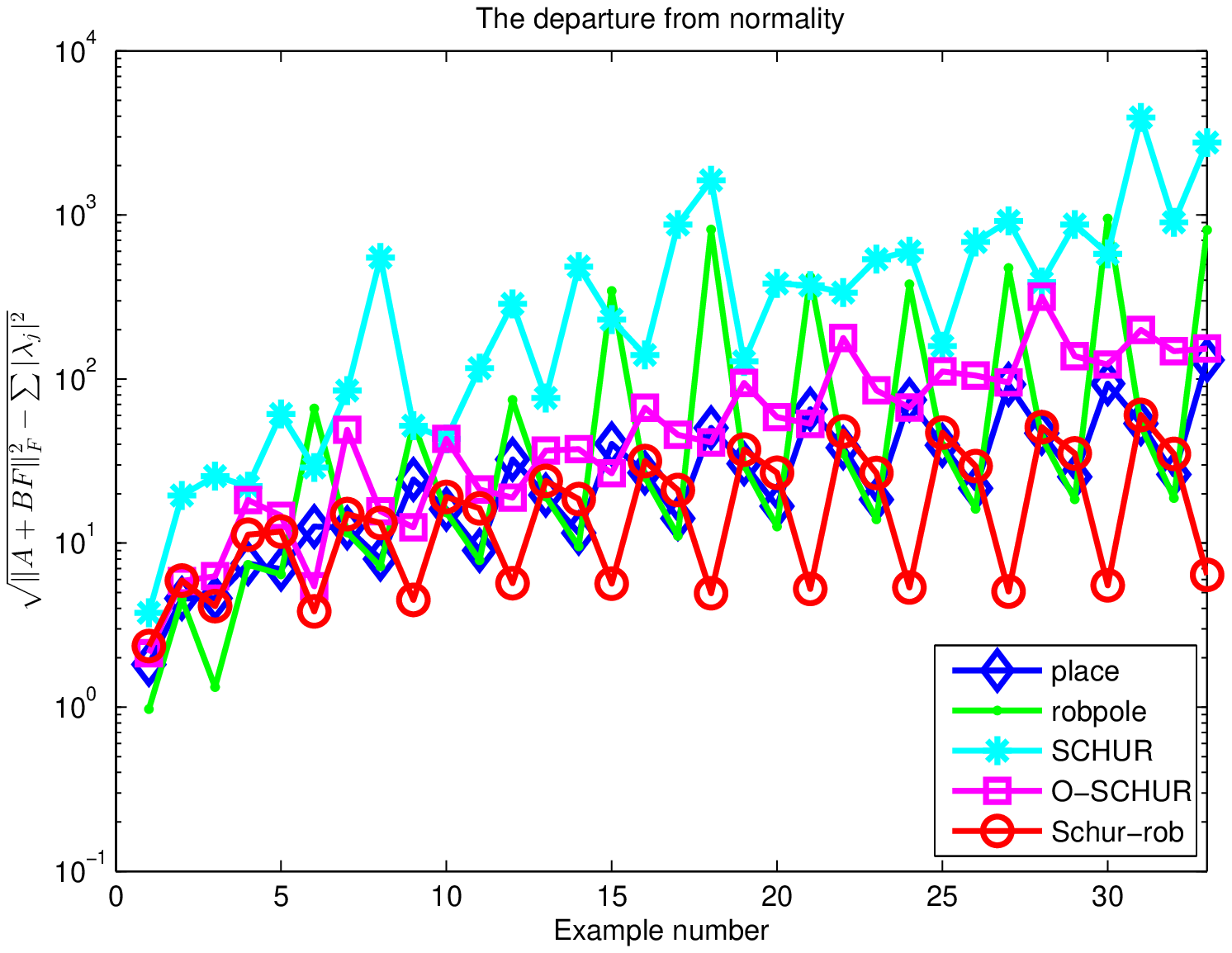}
    \caption{$dep.$ over 50 trials}\label{fig1}
    \end{minipage}&
    \begin{minipage}[t]{0.5\textwidth}
    \includegraphics[width=3.in,height=2.1in]{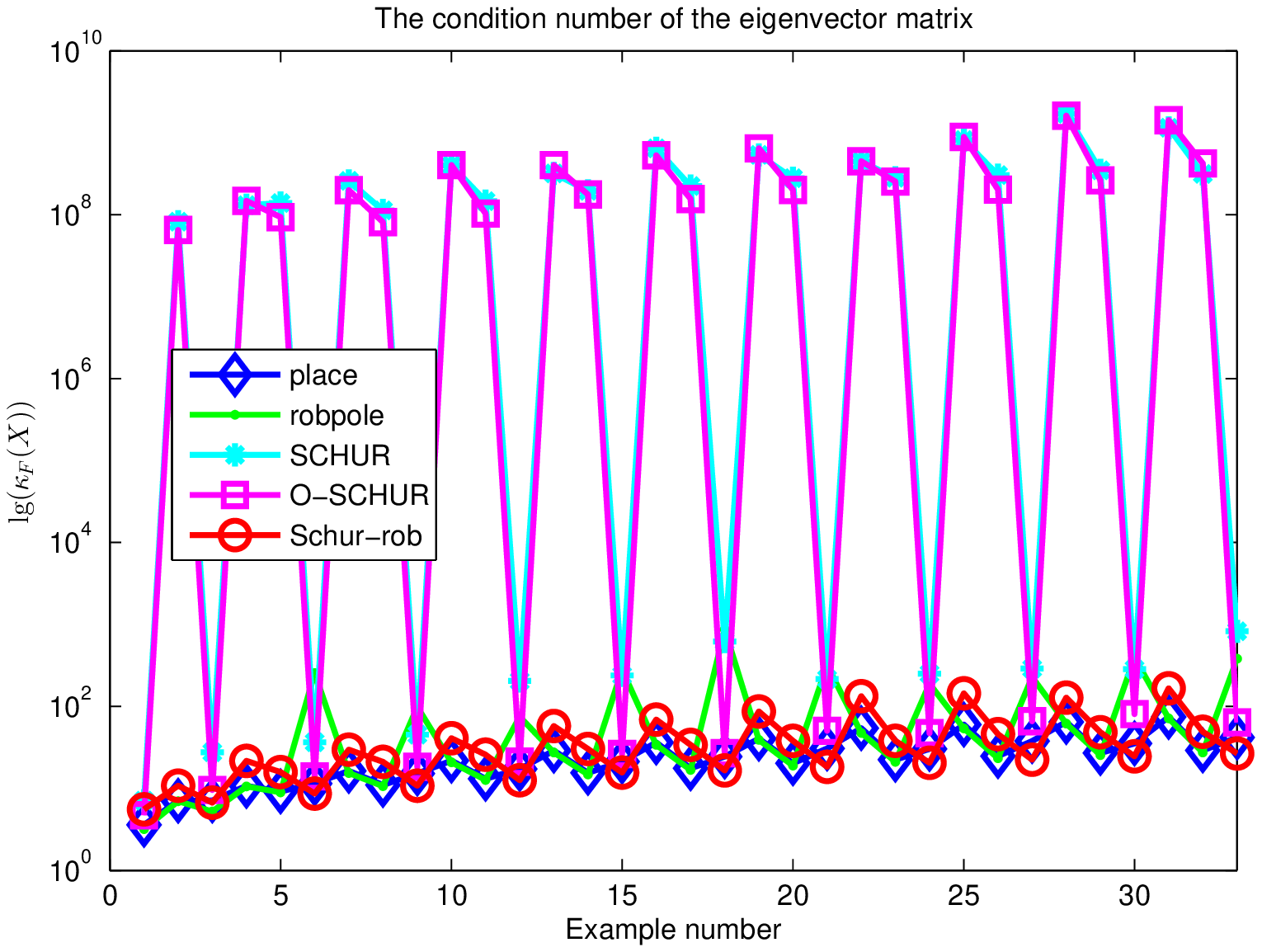}
    \caption{$\kappa_F(X)$ over 50 trials}\label{fig2}
    \end{minipage}
\end{tabular}
\end{figure}

\begin{figure}[H]
\centering
\begin{tabular}{ccc}
    \begin{minipage}[t]{0.5\textwidth}
    \includegraphics[width=3.in,height=2.1in]{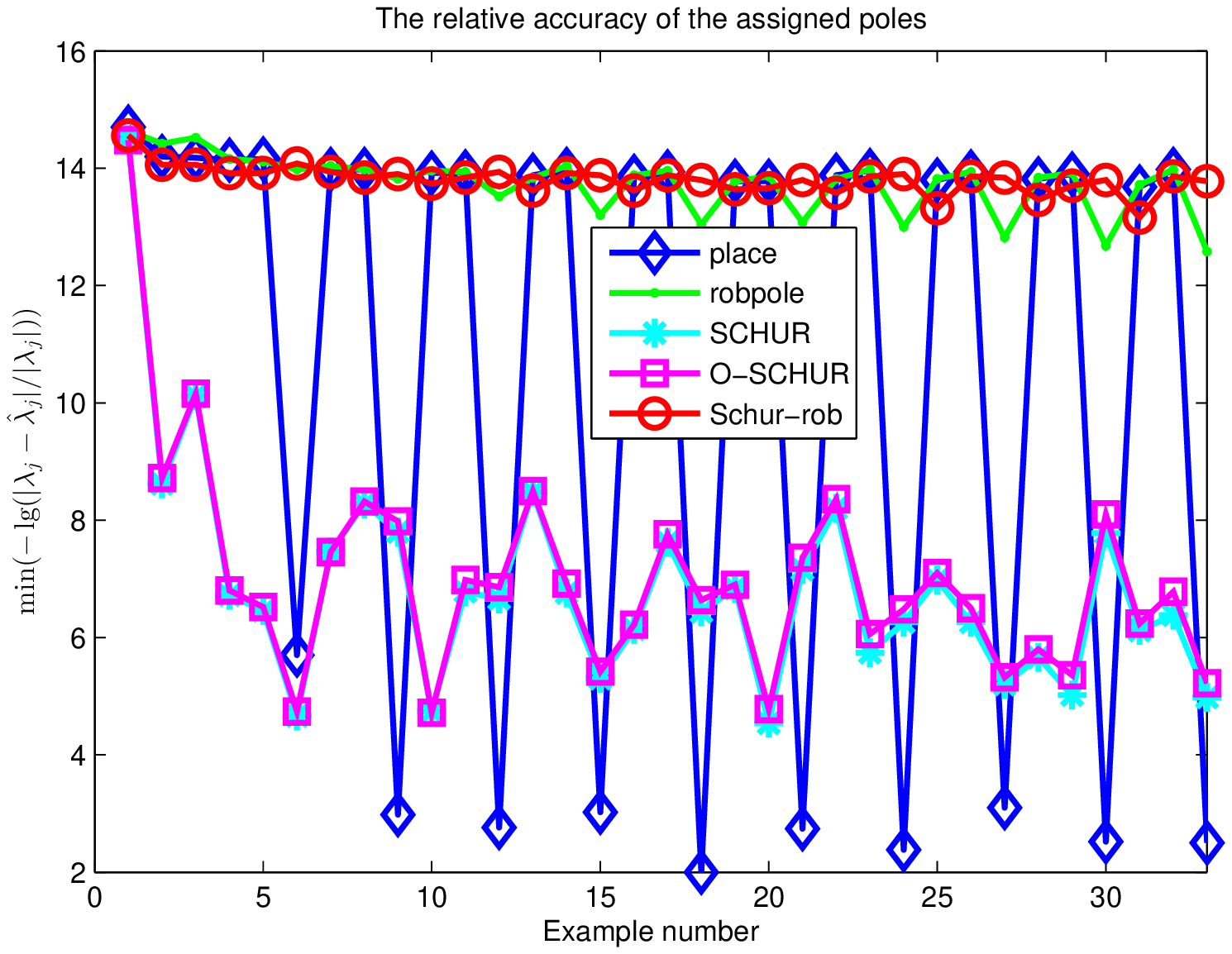}
    \caption{$precs$ over 50 trials}\label{fig3}
    \end{minipage}&
    \begin{minipage}[t]{0.5\textwidth}
   \includegraphics[width=3.in,height=2.1in]{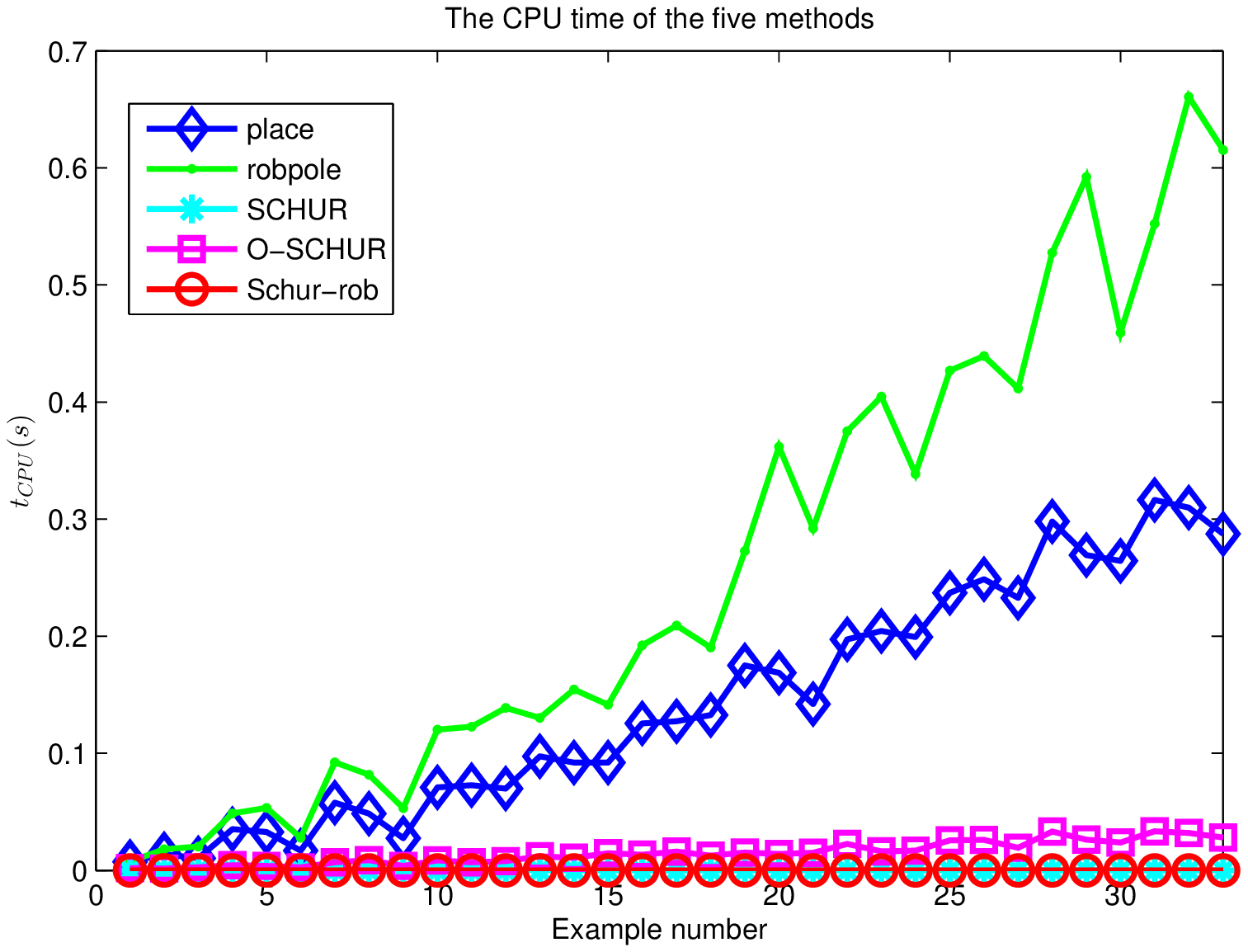}
    \caption{CPU time over 50 trials}\label{fig4}
    \end{minipage}
\end{tabular}
\end{figure}

All these figures show that our \textbf{Schur-rob} algorithm can produce comparable or even better results as
\textbf{place} and \textbf{robpole}, but with much less CPU time. 
}\end{Example}

\section{Conclusion}
Pole assignment problem for multi-input control is generally under-determined. And utilizing this freedom to make the
closed-loop system matrix to be insensitive to perturbations as far as possible evokes the state-feedback robust pole
assignment problem ({\bf SFRPA}) arising. Based on {\bf SCHUR} \cite{Chu2}, we propose a new direct method
to solve the {\bf SFRPA}, which obtains the real Schur form of the closed-loop system matrix and tends to
minimize its departure from normality via solving some  standard eigen-problems. Many numerical examples show that
our algorithm can produce comparable or even better results than existing methods,
but with much less computational costs than the two  classic methods \textbf{place} and \textbf{robpole}.



\begin{thebibliography}{27}

\bibitem {AB}
{J.~Abels and P.~Benner}, {CAREX - A collection of benchmark examples
for continuous-time algebraic Riccati equations (Version 2.0)},
{Katholieke Universiteit Leuven, ESAT/SISTA, Leuven, Belgium,
SLICOT Working Note 1999-14, Nov. 1999. [Online]. Available:}
\url {http://www.slicot.de/REPORTS/SLWN1999-14.ps.gz.}

\bibitem {AB2}
{J.~Abels and P.~Benner}, {DAREX - A collection of benchmark examples
for discrete-time algebraic Riccati equations (Version 2.0)},
{Katholieke Universiteit Leuven, ESAT/SISTA, Leuven, Belgium,
SLICOT Working Note 1999-16, Dec. 1999. [Online]. Available:}
\url {http://www.slicot.de/REPORTS/SLWN1999-16.ps.gz.}


\bibitem {ASC}
{A.N.~Andry, E.Y.~Shapiro and J.C.~Chung}, {Eigenstructure assignment for linear systems},
{\it IEEE Transactions on Aerospace and Electronic Systems}, 19(1983), 711--729.

\bibitem{BD}
{S.P.~Bhattacharyya and E.~De Souza}, {Pole assignment via Sylvester's equation},
{\it Systems \verb'&'  Control Letters}, 1(1982), 261--263.

\bibitem{BN}
{R.~Byers and S.G.~Nash}, {Approaches to robust pole assignment},
{\it International Journal of Control}, 49(1989), 97--117.

\bibitem{CB}
{R.K.~Cavin and S.P.~Bhattacharyya}, {Robust and well-conditioned eigenstructure assignment via sylvester's equation},
{\it Optimal Control Applications and Methods}, 4(1983), 205--212.

\bibitem{Chu0}
{E.K.W.~Chu}, {A pole-assignment algorithm for linear state feedback},
{\it System \verb'&'  Control Letters}, 7(1986), 289--299.

\bibitem{Chu2}
{E.K.W.~Chu}, {Pole assignment via the schur form},
{\it Systems  \verb'&' Control Letters}, 56(2007), 303--314.

\bibitem{Dic}
{A.~Dickman}, {On the robustness of multivariable linear feedback systems in state-space representation},
{\it IEEE Transactions on Automatic Control}, 32(1987), 407--410.

\bibitem{FO}
{M.~Fahmy and J.~O'Reilly}, {On eigenstructure assignment in linear multivariable systems},
{ \it IEEE Transactions on Automatic Control},  27(1982), 690--693.

\bibitem{Hen}
{P.~Henrici}, {Bounds for iterates, inverses, spectral variation and fields of values of non-normal matrices},
{\it Numerische Mathematik }, 4(1962), 24--40.

\bibitem{Ka}
{S.K.~Katti}, {Pole placement in multi-input systems via elementary transformations},
{\it International Journal of Control}, 37(1983), 315--347.

\bibitem{KNV}
{J.~Kautsky, N.K.~Nichols and P.~Van Dooren}, {Robust pole assignment in linear state feedback},
{\it International Journal of Control}, 41(1985), 1129--1155.

\bibitem{LY}
{J.~Lam and W.Y.~Van}, {A gradient flow approach to the robust pole-placement problem},
{\it International Journal of Robust and Nonlinear Control}, 5(1995), 175--185.

\bibitem{MP2}
{G.S.~Miminis and C.C.~Paige}, {A direct algorithm for pole assignment of time-invariant multi-input
linear systems using state feedback}, {\it Automatica}, 24(1988), 343--356.


\bibitem{MP3}
{G.S.~Miminis and C.C.~Paige}, {A QR-like approach for the eigenvalue assignment problem},
{in} {\it Proceedings of the 2nd Hellenic Conference on Mathematics and Informatics}, Athens, Greece, Sept., 1994.

\bibitem{PM}
{R.V.~Patel and P.~Misra}, {Numerical algorithms for eigenvalue assignment by state feedback},
{\it Proceedings of the IEEE }, 72(1984), 1755--1764.

\bibitem{PCK1}
{P.Hr.~Petkov, N.D.~Christov and M.M.~Konstantinov},
{A computational algorithm for pole assignment of linear multiinput systems},
{\it  IEEE Transactions on Automatic Control}, 31(1986), 1044--1047.

\bibitem{RG}
{K.~Ramar and V.~Gourishankar},
{Utilization of the design freedom of pole assignment feedback controllers of unrestricted rank},
{\it International Journal of Control}, 24(1976), 423--430.

\bibitem{RM}
{D.G.~Retallack and A.G.J. ~MacFarlane}, { Pole-shifting techniques for multivariable feedback systems},
{\it Proceedings of the Institution of Electrical Engineers}, 117(1970), 1037--1038.


\bibitem{SAY}
{V.~Sima, A.L.~Tits and  Y.~Yang}, {Computational experience with robust pole assignment algorithms},{in }
{\it Proceedings of the 2006 IEEE
Conference on Computer Aided Control Systems Design},
Munich, Germany, Oct. 4-6, 2006.

\bibitem{SSun}
{G.W.~Stewart and J.G.~Sun}, {\it Matrix Perturbation Theory}, {Academic Press}, New York, 1990.

\bibitem{Tits}
{A.L.~Tits and Y.~Yang}, {Globally convergent algorithms for robust pole assignment by state feedback},
{\it IEEE Transactions on Automatic Control } 41(1996), 1432--1452.

\bibitem{Var}
{A.~Varga}, {A Schur method for pole assignment},
{\it IEEE Transactions on Automatic Control}, 26(1981), 517--519.



\bibitem{Won}
{W.M.~Wonham}, {On pole assignment in multi-input controllable linear systems},
{\it IEEE Transactions on Automatic Control}, 12(1967), 660--665.

\bibitem{Wonh}
{W.M.~Wonham}, { \it Linear Multivariable Control: A Geometric Approach}, {2nd ed., Springer-Verlag}, New York, 1979.

\bibitem{XU}  
{S.F.~Xu}, {\it An Introduction to Inverse Algebraic Eigenvalue Problems},
{Peking University Press, Beijing, and
 Vieweg, Braunschweig}, 1998.

\end{thebibliography}
\end{document}